\documentclass[a4paper,12pt]{article}
\usepackage{amsmath,amsthm,amsfonts,amssymb,bbm,graphics,psfrag,amscd}
\usepackage{fullpage}
\usepackage{epic,eepic}
\bibliographystyle{abbrv}

\title{Asymptotics of the allele frequency spectrum associated with
the Bolthausen-Sznitman coalescent}

\author{Anne-Laure Basdevant \thanks{Laboratoire de Probabilit\'es et
Mod\`eles Al\'eatoires, Universit\'e Pierre et Marie Curie (Paris VI)}
\and Christina Goldschmidt \thanks{Department of Statistics,
University of Oxford}}

\date{}

\setlength{\parindent}{0pt}
\setlength{\parskip}{11pt}

\newcommand{\E}[1]{\ensuremath{\mathbb{E} \left[#1 \right]}}
\newcommand{\Prob}[1]{\ensuremath{\mathbb{P} \left(#1 \right)}}

\newcommand{\R}{\ensuremath{\mathbb{R}}}
\newcommand{\Z}{\ensuremath{\mathbb{Z}}}
\newcommand{\N}{\ensuremath{\mathbb{N}}}

\newcommand{\ch}[2]{\ensuremath{\left( \begin{smallmatrix} #1 \\ #2
\end{smallmatrix} \right)}}

\newcommand{\convdistn}{\ensuremath{\stackrel{d}{\rightarrow}}}
\newcommand{\convprob}{\ensuremath{\stackrel{p}{\rightarrow}}}

\newcommand{\sumstack}[2]{\ensuremath{\sum_{\substack{#1 \\ #2}}}}

\newcommand{\IO}{\ensuremath{\mathbbm{1}_{\Omega_{n,1}}}}
\newcommand{\deff}{\ensuremath{\overset{\hbox{\tiny{def}}}{=}}}
\newcommand{\convas}{\ensuremath{\overset{\text{a.s.}}{\longrightarrow}}}

\newtheorem{thm}{Theorem}[section]

\newtheorem{lemma}[thm]{Lemma}
\newtheorem{prop}[thm]{Proposition}

\begin{document}
\maketitle

\begin{abstract}
  \noindent We work in the context of the infinitely many alleles
  model.  The allelic partition associated with a coalescent process
  started from $n$ individuals is obtained by placing mutations along
  the skeleton of the coalescent tree; for each individual, we trace
  back to the most recent mutation affecting it and group together
  individuals whose most recent mutations are the same.  The number of
  blocks of each of the different possible sizes in this partition is
  the allele frequency spectrum.  The celebrated Ewens sampling
  formula gives precise probabilities for the allele frequency
  spectrum associated with Kingman's coalescent.  This (and the
  degenerate star-shaped coalescent) are the only
  $\Lambda$-coalescents for which explicit probabilities are known,
  although they are known to satisfy a recursion due to M\"ohle.
  Recently, Berestycki, Berestycki and Schweinsberg have proved
  asymptotic results for the allele frequency spectra of the
  Beta$(2-\alpha,\alpha)$ coalescents with $\alpha \in (1,2)$.  In this
  paper, we prove full asymptotics for the case of the
  Bolthausen-Sznitman coalescent.
\end{abstract}

\section{Introduction}

\subsection{Exchangeable random partitions}

In recent years, the topic of exchangeable random partitions has
received a lot of attention (see Pitman~\cite{PitmanStFl} for a lucid
introduction).  A random partition of $\N$ is said to be
\emph{exchangeable} if, for any permutation $\sigma: \N \to \N$ such
that $\sigma(i) = i$ for all $i$ sufficiently large, we have that the
distribution of the partition is unaffected by the application of
$\sigma$.  It was proved by Kingman~\cite{Kingman2,Kingman1} that if
the partition has blocks $(B_i, i \geq 1)$ listed in increasing order
of least elements then the \emph{asymptotic frequencies},
\[
f_i \deff \lim_{n \to \infty} \frac{|B_i \cap \{1,2,\ldots,n\}|}{n}, i
\geq 1,
\]
exist almost surely.  Let $(f_i^{\downarrow})_{i \geq 1}$ be the
collection of asymptotic frequencies ranked in decreasing order.  Then
we can view $(f_i^{\downarrow})_{i \geq 1}$ as a partition of $[0,1]$
into intervals of decreasing length.  In general, since it is possible
that $\sum_{i \geq 1}f^{\downarrow}_i < 1$, there will also be a
distinguished interval of length $1 - \sum_{i \geq
  1}f^{\downarrow}_i$.  Consider now the following \emph{paintbox
  process}, which creates a random partition of $\N$ starting from the
frequencies.  Take independent uniform random variables $U_1, U_2,
\ldots$ on $[0,1]$.  If $U_i$ and $U_j$ land in the same
non-distinguished interval of the partition then assign $i$ and $j$ to
be in the same block.  If $U_i$ lands in the distinguished interval,
assign $i$ to a singleton block.  The partition we create in this way
is exchangeable and has the same distribution as the partition with
which we began.  This procedure can also be thought of in terms of a
classical balls-in-boxes problem with infinitely many unlabelled
boxes, see in particular Karlin~\cite{Karlin} and Gnedin, Hansen and
Pitman~\cite{Gnedin/Hansen/Pitman}.

There are several natural questions that we may ask about an
exchangeable random partition restricted to the first $n$ integers
(or, equivalently, about the partition formed by the first $n$ uniform
random variables in the paintbox process).  How many blocks does this
partition have?  How many blocks does it have of size exactly $k$, for
$1 \leq k \leq n$?  Even in the absence of precise distributional
information for finite $n$, can we obtain $n \to \infty$ limits for
these quantities, in an appropriate sense?  These questions have been
studied for various classes of exchangeable random partitions and
random compositions, see in particular the work of Gnedin, Pitman and
co-authors: \cite{Barbour/Gnedin, GnedinVienna, Gnedin/Pitman/Yor1,
  Gnedin/Pitman/Yor2, Gnedin:sieve, Gnedin/Yakubovich}.

\subsection{Coalescent process and allelic partitions}

In this paper, we study a particular exchangeable random partition
which derives from a coalescent process.  The origins of this
partition lie in population genetics and we will now describe how it
arises and give a brief review of the relevant literature.  For large
populations, genealogies are often modelled using Kingman's
coalescent~\cite{Kingman}.  This is a Markov process taking values in
the space of partitions of $\N$ (or $[n] \deff \{1,2,\ldots,n\}$),
such that the partition becomes coarser and coarser with time.
Whenever the current state has $b$ blocks, any pair of them coalesces
at rate 1, independently of the other blocks and irrespective of the
block sizes.  We start with a sample of genetic material from $n$
individuals.  Here, $n$ is taken to be small compared to the total
underlying population size.  We imagine tracing the genealogy of the
sample \emph{backwards in time} from the present.  Then the blocks of
the coalescent process at time $t$ correspond to the groups of
individuals having the same ancestor time $t$ ago (where time is
measured in units of the total underlying population size).  See
Ewens~\cite{Ewens} or Durrett~\cite{Durrett} for full introductions to
this subject.  In the population genetics setting, it is natural to
introduce the concept of mutation into this model.  One of the most
celebrated results in this area is the \emph{Ewens Sampling Formula},
which was proved by Ewens~\cite{Ewens72} in 1972.  It concerns the
\emph{infinitely many alleles model}, in which every mutation gives
rise to a completely new type.  It says that if we take a sample of
$n$ genes subject to neutral mutation (that is, mutation which does
not confer a selective advantage) which occurs at rate $\theta/2$ for
each individual, then the probability $q(m_1, m_2, \ldots)$ that there
are $m_j$ types which occur exactly $j$ times is given by
\[
q(m_1, m_2, \ldots) = \frac{n! \theta^{\sum_{i \geq 1} m_i}}{(\theta)_{n
    \uparrow} \prod_{j \geq 1} j^{m_j} m_j!},
\]
where $(\theta)_{n \uparrow} = \theta (\theta + 1) \cdots (\theta +
n-1)$ and we must have $\sum_{j \geq 1} j m_j = n$.  Another way of
expressing this (due to Kingman~\cite{Kingman2}) is to picture the
coalescent tree associated with Kingman's coalescent and place
mutations along the length of the skeleton as a Poisson process of
intensity $\theta/2$.  For each individual, trace backwards in time
(i.e.\ forwards in coalescent time) to the most recent mutation.
Group together those individuals whose most recent mutations are the
same; this gives the \emph{allelic partition}.  Then $m_j$ is the
number of blocks in the allelic partition containing exactly $j$
individuals.  

\begin{figure} 
\begin{center}
\resizebox{2.86875in}{!}{\setlength{\unitlength}{0.00083333in}
\begingroup\makeatletter\ifx\SetFigFont\undefined%
\gdef\SetFigFont#1#2#3#4#5{%
  \reset@font\fontsize{#1}{#2pt}%
  \fontfamily{#3}\fontseries{#4}\fontshape{#5}%
  \selectfont}%
\fi\endgroup%
{\renewcommand{\dashlinestretch}{30}
\begin{picture}(4062,2250)(0,-10)
\path(337,2214)(412,2139)
\path(337,2139)(412,2214)
\path(3637,1540)(3712,1465)
\path(3637,1465)(3712,1540)
\path(789,1614)(864,1539)
\path(789,1539)(864,1614)
\path(1757,1614)(1832,1539)
\path(1757,1539)(1832,1614)
\path(1277,415)(1352,340)
\path(1277,340)(1352,415)
\path(150,1875)(600,1875)
\path(600,2175)(600,1875)
\path(600,2025)(3300,2025)
\path(150,1575)(1950,1575)
\path(150,1275)(1200,1275)
\path(150,975)(1200,975)
\path(1200,1275)(1200,975)
\path(150,675)(900,675)
\path(150,375)(1950,375)
\path(150,75)(900,75)
\path(900,675)(900,75)
\path(1200,1125)(1950,1125)
\path(1950,1575)(1950,375)
\path(1950,975)(3300,975)
\path(3300,2025)(3300,975)
\path(3300,1500)(4050,1500)
\path(150,2175)(600,2175)
\put(0,1500){\makebox(0,0)[lb]{{\SetFigFont{12}{14.4}{\familydefault}{\mddefault}{\updefault}6}}}
\put(0,1800){\makebox(0,0)[lb]{{\SetFigFont{12}{14.4}{\familydefault}{\mddefault}{\updefault}7}}}
\put(0,2100){\makebox(0,0)[lb]{{\SetFigFont{12}{14.4}{\familydefault}{\mddefault}{\updefault}1}}}
\put(0,1200){\makebox(0,0)[lb]{{\SetFigFont{12}{14.4}{\familydefault}{\mddefault}{\updefault}4}}}
\put(0,900){\makebox(0,0)[lb]{{\SetFigFont{12}{14.4}{\familydefault}{\mddefault}{\updefault}8}}}
\put(0,600){\makebox(0,0)[lb]{{\SetFigFont{12}{14.4}{\familydefault}{\mddefault}{\updefault}2}}}
\put(0,300){\makebox(0,0)[lb]{{\SetFigFont{12}{14.4}{\familydefault}{\mddefault}{\updefault}5}}}
\put(0,0){\makebox(0,0)[lb]{{\SetFigFont{12}{14.4}{\familydefault}{\mddefault}{\updefault}3}}}
\end{picture}
}}
\hspace{1cm}
\resizebox{2.65625in}{!}{\setlength{\unitlength}{0.00083333in}
\begingroup\makeatletter\ifx\SetFigFont\undefined%
\gdef\SetFigFont#1#2#3#4#5{%
  \reset@font\fontsize{#1}{#2pt}%
  \fontfamily{#3}\fontseries{#4}\fontshape{#5}%
  \selectfont}%
\fi\endgroup%
{\renewcommand{\dashlinestretch}{30}
\begin{picture}(3724,2250)(0,-10)
\path(337,2214)(412,2139)
\path(337,2139)(412,2214)
\path(3637,1540)(3712,1465)
\path(3637,1465)(3712,1540)
\path(789,1614)(864,1539)
\path(789,1539)(864,1614)
\path(1273,410)(1348,335)
\path(1273,335)(1348,410)
\path(150,1875)(600,1875)
\path(600,2025)(600,1875)
\path(600,2025)(3300,2025)
\path(150,1575)(825,1575)
\path(150,1275)(1200,1275)
\path(150,975)(1200,975)
\path(1200,1275)(1200,975)
\path(150,675)(900,675)
\path(150,75)(900,75)
\path(900,675)(900,75)
\path(1200,1125)(1950,1125)
\path(1950,1125)(1950,975)
\path(1950,975)(3300,975)
\path(3300,2025)(3300,975)
\path(3300,1500)(3675,1500)
\path(150,2175)(373,2174)
\path(150,375)(1308,375)
\put(0,2100){\makebox(0,0)[lb]{{\SetFigFont{12}{14.4}{\familydefault}{\mddefault}{\updefault}1}}}
\put(0,1800){\makebox(0,0)[lb]{{\SetFigFont{12}{14.4}{\familydefault}{\mddefault}{\updefault}7}}}
\put(0,1500){\makebox(0,0)[lb]{{\SetFigFont{12}{14.4}{\familydefault}{\mddefault}{\updefault}6}}}
\put(0,1200){\makebox(0,0)[lb]{{\SetFigFont{12}{14.4}{\familydefault}{\mddefault}{\updefault}4}}}
\put(0,900){\makebox(0,0)[lb]{{\SetFigFont{12}{14.4}{\familydefault}{\mddefault}{\updefault}8}}}
\put(0,600){\makebox(0,0)[lb]{{\SetFigFont{12}{14.4}{\familydefault}{\mddefault}{\updefault}2}}}
\put(0,300){\makebox(0,0)[lb]{{\SetFigFont{12}{14.4}{\familydefault}{\mddefault}{\updefault}5}}}
\put(0,0){\makebox(0,0)[lb]{{\SetFigFont{12}{14.4}{\familydefault}{\mddefault}{\updefault}3}}}
\end{picture}
}}
\end{center}
\caption{Left: a coalescent tree with mutations.  Right: the sections
  of the tree relevant for the formation of the allelic partition.
  Note that from each individual we look back only to the last
  mutation, so that the second mutation on the lineage of 6 is
  ignored.  The allelic partition here is $\{1\}, \{2,3,5\},
  \{4,7,8\}, \{6\}$.  If $N_k(n)$ is the number of blocks of size $k$
  when we start with $n$ individuals, then we have $N_1(8) = 2$,
  $N_2(8) = 0$, $N_3(8) = 2$, $N_3(8) = N_4(8) = \cdots = N_8(8) =
  0$.}
\label{fig:tree}
\end{figure}

It is natural to extend these ideas to more general coalescent
processes.  See Figure~\ref{fig:tree} for an example of a general
coalescent tree and its allelic partition.  The $\Lambda$-coalescents
are a class of Markovian coalescent processes which were introduced by
Pitman~\cite{PitmanLambdaCoal} and Sagitov~\cite{Sagitov}.  Like
Kingman's coalescent, they take as their state-space the set of
partitions of $[n]$ (or, indeed, of the whole set of natural numbers).
Their evolution is such that only one block is formed in any
coalescence event and rates of coalescence depend only on the number
of blocks present and not on their sizes.  Take $\Lambda$ to be a
finite measure on $[0,1]$.  In order to give a formal description of
the coalescent, it is sufficient to give its jump rates.  Whenever
there are $b$ blocks present, any particular $k$ of them coalesce at
rate
\[
\lambda_{b,k} \deff \int_0^1 x^{k-2}(1-x)^{b-k} \Lambda(dx), \quad 2 \leq
k \leq b.
\]
Note that, in contrast to Kingman's coalescent, here we allow
\emph{multiple collisions}; that is, we allow more than two blocks to
join together.  Kingman's coalescent is the case $\Lambda(dx) =
\delta_{0}(dx)$, where unit mass is placed at 0.  The case
$\Lambda(dx) = dx$, called the Bolthausen-Sznitman coalescent, was
introduced by Bolthausen and Sznitman~\cite{Bolthausen/Sznitman} in
the context of spin glasses.  It has many nice properties and appears
to be more tractable than most $\Lambda$-coalescents.  For example,
its marginal distributions are known explicitly
\cite{PitmanLambdaCoal}.  It has been studied in some detail: see, for
example, Pitman~\cite{PitmanLambdaCoal}, Bertoin and Le
Gall~\cite{Bertoin/LeGall}, Basdevant~\cite{Basdevant} and Goldschmidt
and Martin~\cite{Goldschmidt/Martin}.

Another subset of the $\Lambda$-coalescents which has recently been
particularly studied is the \emph{Beta coalescents}, so-called because
$\Lambda$ here is a beta density:
\[
\Lambda(dx) = \frac{1}{\Gamma(2-\alpha)\Gamma(\alpha)} x^{1-\alpha}
(1-x)^{\alpha - 1} dx,
\]
for some $\alpha \in (0,2)$.  (The $\alpha = 1$ case is the
Bolthausen-Sznitman coalescent and, in some sense, $\alpha=2$
corresponds to Kingman's coalescent.)  See Birkner et
al~\cite{Magnificent7} for a representation in terms of
continuous-state branching processes when $\alpha \in (0,2)$.

If we suppose that instead of Kingman's coalescent, the genealogy of
the population evolves according to a general $\Lambda$-coalescent
then, except in the special case of the degenerate star-shaped
coalescent (where $\Lambda(dx) = \delta_1(dx)$), there is no known
explicit expression for the probability $q(m_1, m_2, \ldots)$ of
having $m_j$ blocks in the allelic partition of size $j$.  However,
M\"ohle~\cite{Moehle} has shown that the probabilities $q$ must
satisfy the following recursion:
\[
q(m) = \frac{n \rho}{\lambda_n + n \rho}
q(m - e_1) + \sum_{i=1}^{n-1} \frac{\ch{n}{i+1}
  \lambda_{n,i+1}}{\lambda_n + n \rho} \sum_{j=1}^{n-i}
\frac{j(m_j+1)}{n-i} q(m + e_j - e_{i+j}),
\]
where $\lambda_n = \sum_{k=2}^n \ch{n}{k} \lambda_{n,k}$, $\rho =
\theta/2$, $m = (m_1, m_2, \ldots)$ and $e_i$ is the vector with a 1
in the $i$th co-ordinate and 0 in all the rest.  He has also shown
\cite{Moehle3} that, except in the cases of the star-shaped coalescent
and Kingman's coalescent, the allelic partition is not
\emph{regenerative} in the sense of Gnedin and
Pitman~\cite{Gnedin/Pitman}.  Dong, Gnedin and
Pitman~\cite{Dong/Gnedin/Pitman} have studied various properties of
the allelic partition of a general $\Lambda$-coalescent.  In
particular, they view the allelic partition as the final partition of
a coalescent process with \emph{freeze} (see
Section~\ref{sec:fluidlimit} where we use this formalism) and also
give an alternative description of $q$ as the stationary distribution
of a certain discrete-time Markov chain.

Consider again the Beta coalescents.  Suppose that we start the
coalescent process from the partition of $[n]$ into singletons.  Let
$N_k(n)$ be the number of blocks of size $k$, for $k \geq 1$, and let
$N(n)$ be the total number of blocks, so that $N(n) = \sum_{k=1}^{n}
N_k(n)$.  Then the complete allele frequency spectrum is the vector
\[
(N_1(n), N_2(n), N_3(n), \ldots).
\]
In the case of $\alpha \in (1,2)$, Berestycki, Berestycki and
Schweinsberg~\cite{BBS2,BBS1} have proved that
\begin{gather*}
n^{\alpha-2} N(n) \convprob \frac{\rho \alpha(\alpha - 1)
  \Gamma(\alpha)}{2-\alpha} \\
\intertext{and, for $k \geq 1$, that}
n^{\alpha-2} N_k(n) \convprob \frac{\rho \alpha (\alpha - 1)^2
  \Gamma(k+\alpha -2)}{k!},
\end{gather*}
as $n \to \infty$.

The corresponding convergence results for Kingman's coalescent can be
derived from the Ewens sampling formula: without rescaling, we have
\[
(N_1(n), N_2(n), \ldots) \convdistn (Z_1, Z_2, \ldots),
\]
where $Z_1, Z_2, \ldots$ are independent Poisson random variables such
that $Z_i$ has mean $1/i$.  It follows that
\[
\frac{N(n)}{\log n} \convas 1,
\]
as $n \to \infty$ and, moreover, that
\[
\frac{N(n) - \log n}{\sqrt{\log n}} \convdistn \mathrm{N}(0,1).
\]
It is clear that the Beta coalescents belong to a completely different
asymptotic regime.  

A related problem concerns the \emph{infinitely many sites} model.
Here, as before, we put mutations on the coalescent tree, but this
time we imagine that we trace the genealogy of long stretches of
chromosome from each of our $n$ individuals.  Each time a mutation
arrives, it affects a different site on the chromosome.  The number of
\emph{segregating sites} is the number of sites at which there exists
more than one allele in our sample of chromosomes.  This is simply the
number of mutations on the skeleton of the coalescent tree.  Let
$S(n)$ be the number of segregating sites when we start with a sample
of $n$ individuals.  Clearly the distributions of $S(n)$ and $N(n)$
are related, in that in both cases we count mutations along the
skeleton of the coalescent tree; for $N(n)$, we discard any mutation
which arises on a lineage all of whose members have already mutated.
In \cite{Moehle2}, M\"ohle has studied the limiting distribution of
$S(n)$ in the special case where the measure $x^{-1}\Lambda(dx)$ is
finite (which includes the Beta coalescents with $\alpha \in (0,1)$).
He proves that
\begin{equation} \label{eqn:Moehle}
\frac{S(n)}{n} \convdistn \rho \int_0^{\infty} \exp(-\sigma_t) dt,
\end{equation}
where $(\sigma_t)_{t \geq 0}$ is a drift-free subordinator with L\'evy
measure given by the image under the transformation $x \mapsto
-\log(1-x)$ of the measure $x^{-2} \Lambda(dx)$.

The number of segregating sites is, in turn, closely related to the
\emph{length} of the coalescent tree (i.e.\ the sum of the lengths of
all of the branches) and to the total number of collisions before
absorption.  This has been studied for various 
$\Lambda$-coalescents in \cite{Delmas/Dhersin/S-J,
  Drmota/Iksanov/Moehle/Roesler, Gnedin/Yakubovich2, Iksanov/Moehle}.

\subsection{The Bolthausen-Sznitman allelic partition}

Turning now to the Bolthausen-Sznitman coalescent, Drmota, Iksanov,
M\"ohle and R\"osler \cite{Drmota/Iksanov/Moehle/Roesler} have proved
that
\[
\frac{\log n}{n} S(n) \convprob \rho,
\]
where $S(n)$ is the number of segregating sites.  They have also
proved the corresponding ``central limit theorem'',
\[
\frac{S(n) - \rho a_n}{\rho b_n} \convdistn S,
\]
where $a_n = \frac{n}{\log n} + \frac{n \log \log n}{\log^2 n}$, $b_n
= \frac{n}{\log^2 n}$ and $S$ is a stable random variable having
characteristic function
\[
\exp\left(-\frac{1}{2} \pi |t| + it \log t\right).
\]

The purpose of this paper is to prove the following theorem concerning
the complete allele frequency spectrum of the Bolthausen-Sznitman
coalescent.

\begin{thm} \label{thm:main}
For $k \geq 1$, let $N_k(n)$ be the number of blocks of the allelic
partition of size $k$ when we start with $n$ singleton blocks.  Then
\begin{align*}
\frac{\log n}{n} N_1(n) & \convprob \rho \\
\intertext{and, for $k \geq 2$,}
\frac{(\log n)^2}{n} N_k(n) & \convprob \frac{\rho}{k(k-1)}.
\end{align*}
\end{thm}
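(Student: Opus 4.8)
The plan is to work with the coalescent-with-freezing representation of the allelic partition described by Dong, Gnedin and Pitman: the active blocks evolve as a Bolthausen--Sznitman coalescent, each active block is independently struck by a mutation (``frozen'') at rate $\rho$, and a frozen block of size $k$ contributes one to $N_k(n)$. Writing $B_k(t)$ for the number of active blocks of size $k$ at time $t$, the freezing events that produce size-$k$ blocks form a counting process with intensity $\rho B_k(t)$, so $N_k(n)=\rho\int_0^\infty B_k(t)\,dt+M_k$, where $M_k$ is a martingale with $\var{M_k}=\E{N_k(n)}$. The rate facts I would use repeatedly are that, when $b$ blocks are present, the total coalescence rate is $\lambda_b=b-1$, a given block is involved in a collision at rate $H_{b-1}=\sum_{i=1}^{b-1} i^{-1}$, and the rate of a $k$-fold collision is $\binom{b}{k}\lambda_{b,k}=\frac{b}{k(k-1)}$.

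First I would pin down the block-counting process $b(t)$. Its expected rate of decrease is $b(H_b-1)+\rho b$, so on the logarithmic scale $\frac{d}{dt}\log b\approx-\log b$, giving the fluid limit $\log b(t)/\log n\to e^{-t}$, i.e.\ $b(t)\approx n^{e^{-t}}$ and $H_{b(t)-1}\approx(\log n)e^{-t}$; this I would make rigorous by a Gronwall/martingale estimate for $\log b(t)/\log n$. For $N_1(n)$, singletons are never created and each is lost (to freezing or to a collision) at rate $\rho+H_{b-1}$, so $\E{B_1(t)}\approx n\exp(-\int_0^t(\rho+H_{b(s)-1})\,ds)\approx n^{e^{-t}}e^{-\rho t}$; integrating and noting the mass sits at $t=O(1/\log n)$ gives $\E{N_1(n)}\sim\rho n/\log n$. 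For $k\ge2$ the decisive point is that, in the early regime $b\approx n$ that dominates the integral, almost every block is a singleton, so a size-$k$ block is created essentially only by a $k$-fold collision of singletons, at rate $\approx\binom{b}{k}\lambda_{b,k}=\frac{b}{k(k-1)}$, while each size-$k$ block is destroyed at rate $\rho+H_{b-1}\approx\log b$. Balancing creation against destruction (a quasi-stationary approximation that becomes exact after integrating in $t$ by Fubini) yields $\E{B_k(t)}\approx\frac{b(t)}{k(k-1)\log b(t)}$ and hence $\E{N_k(n)}\approx\frac{\rho}{k(k-1)}\int_0^\infty\frac{b(t)}{\log b(t)}\,dt\sim\frac{\rho}{k(k-1)}\frac{n}{(\log n)^2}$; the factor $1/(k(k-1))$ is exactly the combinatorial factor $\binom{b}{k}\lambda_{b,k}/b$.

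It remains to upgrade these first-moment estimates to convergence in probability, for which I would bound the variance and apply Chebyshev. The martingale contribution is cheap: $\var{M_k}=\E{N_k(n)}$, which is of smaller order than $\E{N_k(n)}^2$ for every $k$. The real work is to show that $\int_0^\infty B_k(t)\,dt$ concentrates, i.e.\ that $\var{\int_0^\infty B_k(t)\,dt}=o(n^2/(\log n)^{4})$ for $k\ge2$ (and $o(n^2/(\log n)^2)$ for $k=1$), and I expect this to be the main obstacle. It forces one to understand the joint law of two distinct blocks: either through a two-tagged-block (``two-spine'') computation of $\E{B_k(s)B_k(t)}$, controlling the correlation induced by the shared environment $b(\cdot)$ and by the (rare) direct interaction of the two blocks, or through a functional law of large numbers for the vector $(\,b(t),B_1(t),\dots,B_k(t)\,)$. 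The delicate feature is that the whole integral is governed by the boundary layer $t=O(1/\log n)$, where $b\approx n$ and where the build-up time of $B_k$ to its quasi-stationary level is itself of order $1/\log n$, comparable to the width of the window; the approximations ``$b\approx n^{e^{-t}}$'' and ``almost all blocks are singletons'' must therefore be controlled with explicit errors uniformly on this window, and the number of non-singleton blocks there must be shown to be negligible. Establishing these uniform estimates, rather than the essentially algebraic mean computation, is where the difficulty lies.
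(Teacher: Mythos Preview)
Your heuristics are correct and your outline would work, but the paper takes a different route that sidesteps precisely the variance estimate you flag as the main obstacle. Rather than computing moments of $\int_0^\infty B_k(t)\,dt$ in real time, the paper stretches the boundary layer by passing to slow time $t\mapsto t/\log n$ and rescaling space with three different factors (singletons by $1/n$; active blocks of size $\ge 2$ by $(\log n)/n$; the frozen counts $Z_k$ by $(\log n)^2/n$ for $k\ge 2$). The rescaled vector is then shown, via the Darling--Norris drift/variance/Gronwall machinery, to converge uniformly on each compact $[0,t_0]$ to the explicit ODE solution $x_1(t)=e^{-t}$, $x_k(t)=\tfrac{te^{-t}}{k(k-1)}$, $z_k(t)=\tfrac{\rho}{k(k-1)}(1-e^{-t}-te^{-t})$. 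The only second-moment input is that the \emph{infinitesimal} variance of the rescaled process is $O(1/\log n)$; no two-spine covariance of $B_k(s),B_k(t)$ is ever needed. A separate short tail argument (a time-changed near-martingale for $2\bar X_1+\bar Y_2$) then shows the contribution on $[t_0/\log n,T_n]$ is negligible, and letting $t_0\to\infty$ gives the theorem. So the paper's packaging buys concentration for free from the sup-norm fluid limit; what your approach would buy, if the variance step were carried through, is a more transparent derivation of the constants---your Fubini remark that integrating the creation/destruction balance in $t$ washes out the build-up transient is exactly why $z_k(\infty)=\rho/(k(k-1))$. Note also that your global approximation $b(t)\approx n^{e^{-t}}$ is stronger than what the paper uses (only the linearised version on $[0,t_0/\log n]$ enters), but for the same reason your quasi-stationary integral $\int_0^\infty b/\log b\,dt$ must be cut off before $\log b\to 0$, which is effectively what the split at $t_0/\log n$ accomplishes.
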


As a corollary, we obtain that $N(n)$, which is \emph{a priori}
smaller than $S(n)$, has the same first-order asymptotics:
\[
\frac{\log n}{n} N(n) \convprob \rho.
\]

Suppose that we start a general $\Lambda$-coalescent
$(\Pi(t))_{t \geq 0}$ from the partition of $\N$ into singletons.
Then it has been proved by Pitman~\cite{PitmanLambdaCoal} that either $\Pi(t)$
has only finitely many blocks for all $t > 0$ ($(\Pi(t))_{t \geq 0}$
\emph{comes down from infinity}) or $\Pi(t)$ has infinitely many
blocks for all time ($(\Pi(t))_{t \geq 0}$ \emph{stays infinite}).
See Schweinsberg~\cite{Schweinsberg} for an explicit criterion for
when a $\Lambda$-coalcescent comes down from infinity, in terms of the
$\lambda_{b,k}$'s.  The fundamental difference between the Beta
coalescents for $\alpha \in (1,2)$ and $\alpha \in (0,1]$ (including
the Bolthausen-Sznitman coalescent) is that the former coalescents
come down from infinity and the latter do not.  This accounts for the
fact that in Berestycki, Berestycki and Schweinsberg's result, the
scalings are the same for all different sizes of block as $n$ becomes
large, whereas in our theorem, the singletons must be scaled
differently.  Essentially, coalescence occurs rather slowly and the
overwhelming first-order effect is mutation, which causes the allelic
partition to consist mostly of singletons.  However, at the second
order (i.e.\ considering $(N_2(n), N_3(n), \ldots)$), we can feel the
effect of the coalescence.

We do not claim that our results are of any application in population
genetics: to the best of our knowledge, the Bolthausen-Sznitman
coalescent has not been used to model the genealogy of any biological
population.  Nonetheless, our method may extend to the case of
coalescents which are more biologically realistic.

Our method of proof is of some interest in itself.  We track the
formation of the allelic partition using a certain Markov process, for
which we then prove a fluid limit (functional law of large numbers).
The terminal value of our process gives the allele frequency spectrum
and the fluid limit result, after a little extra work, allows us to
read off the asymptotics.

Fluid limits have been widely used in the analysis of stochastic
networks (see, for example, \cite{Chen/Yao}, \cite{Whitt}) and in the
study of random graphs (\cite{Darling/Norris:hypergraph},
\cite{Pittel/Spencer/Wormald}, \cite{Wormald}).  In some sense, the
prototypical result of the type in which we are interested is the
following: suppose we take a Poisson process, $(X(t))_{t \geq 0}$ of
rate $1$, started from 0.  Then the re-scaled process $(N^{-1}
X(Nt))_{t \geq 0}$ stays close (in a rather strong sense) to the
deterministic function $x(t) = t$, at least on compact time-intervals.
For a general pure jump Markov process, the fluid limit is determined
as the solution to a differential equation.  In this article we have
relied on the neat formulation in Darling and
Norris~\cite{Darling/Norris}.  However, our fluid limit is somewhat
unusual.  Firstly, instead of scaling time up, we actually scale it
down, by a factor of $\log n$.  Moreover, we have three different
``space'' scalings for different co-ordinates of our
(multidimensional) process.

\section{Fluid limit} \label{sec:fluidlimit}

Consider the formation of the allelic partition, starting from the
partition into singletons and run until every individual has
received a mutation.  The easiest way to think of this is to use the
terminology of Dong, Gnedin and Pitman~\cite{Dong/Gnedin/Pitman} in
which blocks have two possible states: \emph{active} and
\emph{frozen}. We start with all blocks active and equal to
singletons. Active blocks coalesce according to the rules of the
Bolthausen-Sznitman coalescent: if there are $b$ active blocks
present then any particular $k$ of them coalesce at rate
$\frac{(k-2)!(b-k)!}{(b-1)!}$. Moreover, every active block becomes
frozen at rate $\rho$ and stays frozen forever (this act of freezing
creates a block in the allelic partition).

The data we will track are as follows.  Let $X_k^n(t)$ be the number
of active blocks of the \emph{coalescent} partition at time $t$
containing $k$ individuals, $k \geq 1$, where we start at time $0$
with $n$ active individuals in singleton blocks.  For $k \geq 1$, let
$Z_k^n(t)$ be the number of blocks of the \emph{allelic} partition of
size $k$ which have already been formed by time $t$ (this is the
number of times so far that an active block containing precisely $k$
individuals has become frozen).  For $d \geq 1$, let $Y_{d+1}^n(t) =
\sum_{k=d+1}^{\infty} X_k^n(t)$, the number of active blocks
containing at least $d+1$ individuals.

It is straightforward to see that, for any $d \geq 1$,
\[
X^{n,d}(t)\deff(X_1^n(t), X_2^n(t), \ldots, X_{d}^n(t),
Y_{d+1}^{n}(t), Z_d^n(t))_{t \geq 0}
\]
is a (time-homogeneous) Markov jump process taking values in $\{0,1,2,
\ldots, n\}^{d+2}$, with
\[
X_1^n(0) = n, \quad X_k^n(0) = 0, \quad 2 \leq k \leq d, \quad
Y_{d+1}^n(0) = 0, \quad Z_d^n(0) = 0.
\]

Now put
\begin{alignat*}{3}
\bar{X}_1^n(t)
& = \frac{1}{n} X_1^n\left(\frac{t}{\log n}\right), \quad
& \bar{X}_k^n(t)
& = \frac{\log n}{n} X_k^n\left(\frac{t}{\log n}\right)
\quad \text{for $k \geq 2$,} \\
\bar{Z}_1^n(t)
& = \frac{\log n}{n} Z_1^n\left(\frac{t}{\log n}\right), \quad
& \bar{Z}_k^n(t)
& = \frac{(\log n)^2}{n} Z_k^n\left(\frac{t}{\log n}\right)
\quad \text{for $k \geq 2$}
\end{alignat*}
and
\[
\bar{Y}_{d+1}^n(t) = \frac{\log n}{n} Y_{d+1}^n\left(\frac{t}{\log n}\right)
\quad \text{for $d \geq 1$}.
\]

Fix $d \geq 1$ and write
$$
 \bar{X}^{n,d}(t) =(\bar{X}_1^n(t), \bar{X}_2^n(t),
\ldots, \bar{X}_d^n(t), \bar{Y}_{d+1}^n(t), \bar{Z}_{d}^n(t))
$$

and define a stopping time
\[
T_n = \inf\{t \geq 0: X^{n,d}(t) = \mathbf{0}\}.
\]
(Note that $T_n$ is the same regardless of the value of $d$.)

For $t \geq 0$, let
\begin{alignat*}{3}
x_1(t) & = e^{-t}, \quad &
x_k(t) & = \frac{te^{-t}}{k(k-1)}, \quad 2 \leq k \leq d, \\
z_1(t) & = \rho (1 - e^{-t}), \quad &
z_k(t) & = \frac{\rho}{k(k-1)} (1 - e^{-t} - t e^{-t}),
\quad 2 \leq k \leq d
\end{alignat*}
and
\[
y_{d+1}(t) = \frac{te^{-t}}{d}.
\]
\[
x^{(d)}(t) = (x_1(t), x_2(t), \ldots, x_d(t), y_{d+1}(t), z_d(t)).
\]
We write $\|\cdot\|$ for the Euclidean norm on $\R^{d+2}$.

\begin{prop} \label{prop:fluidlimit}
Fix $d \geq 1$ and let $t_0 < \infty$.  Then, given $\epsilon > 0$,
\[
\Prob{\sup_{0 \leq t \leq t_0} \| \bar{X}^{n,d}(t) - x^{(d)}(t) \| >
\epsilon} \to 0
\]
as $n \to \infty$.
\end{prop}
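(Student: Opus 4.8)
The plan is to cast Proposition~\ref{prop:fluidlimit} as a functional law of large numbers and to apply the fluid-limit theorem of Darling and Norris~\cite{Darling/Norris}. First I would write down the generator of the jump process $X^{n,d}$. Writing $b \deff \sum_{k \geq 1} X_k^n$ for the current number of active blocks, there are exactly two kinds of transition: any given active block freezes at rate $\rho$, and any given $k$-subset of the active blocks coalesces at rate $\lambda_{b,k} = \frac{(k-2)!(b-k)!}{(b-1)!}$. Two exact identities organise the whole computation. Since $\Ch{b}{k}\lambda_{b,k} = \frac{b}{k(k-1)}$, the total coalescence rate telescopes, $\sum_{k=2}^{b}\Ch{b}{k}\lambda_{b,k} = b-1$; and since $\Ch{b-1}{k-1}\lambda_{b,k} = \frac{1}{k-1}$, a fixed active block is involved in a coalescence at rate $\sum_{k=2}^{b}\frac{1}{k-1} = H_{b-1}$, the $(b-1)$st harmonic number. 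The appearance of $H_{b-1} \sim \log n$ is precisely what dictates the time rescaling by $1/\log n$.

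Next I would assemble the drift of each coordinate. A singleton can only disappear, by freezing or by being absorbed into a coalescence, so $X_1^n$ has drift $-(\rho + H_{b-1})X_1^n$. Because singletons vastly outnumber larger blocks (they are of order $n$ against order $n/\log n$), to leading order a block of size $k \geq 2$ is created only by the merger of exactly $k$ singletons, at rate $\Ch{X_1^n}{k}\lambda_{b,k} \approx \frac{X_1^n}{k(k-1)}$, and is removed at rate $H_{b-1}X_k^n$ by coalescence (freezing being lower order); similarly $Y_{d+1}^n$ gains a block at rate $\sum_{k \geq d+1}\frac{X_1^n}{k(k-1)} \approx \frac{X_1^n}{d}$ and loses mass at rate $H_{b-1}Y_{d+1}^n$, while $Z_1^n$ and $Z_k^n$ grow at rates $\rho X_1^n$ and $\rho X_k^n$. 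Applying the three space scalings and the time scaling, and using $H_{b-1}/\log n \to 1$, these drifts converge to the vector field $\dot x_1 = -x_1$, $\dot x_k = \frac{x_1}{k(k-1)} - x_k$, $\dot y_{d+1} = \frac{x_1}{d} - y_{d+1}$, $\dot z_1 = \rho x_1$, $\dot z_k = \rho x_k$. This system is linear and triangular --- $x_1$ is autonomous, the remaining coordinates are then driven by $x_1$ --- hence globally Lipschitz, and its unique solution started from $(1,0,\ldots,0)$ is exactly the stated $x^{(d)}$; in particular $x_1(t) = e^{-t}$ and integration against $e^{-t}$ produces the $te^{-t}$ factors.

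The main obstacle, and the reason this fluid limit is nonstandard, is the $n$-dependent and a priori unbounded coefficient $H_{b-1}$ multiplying the linear terms: the limiting vector field only emerges after replacing $H_{b-1}$ by $\log n$, and the error $H_{b-1}/\log n - 1$ must be shown to be uniformly $o(1)$ over the time window $[0, t_0/\log n]$. Since $H_{b-1} = \log b + \bigO(1)$, this reduces to controlling $b$ from below. I would obtain an a priori estimate showing that, with probability tending to $1$, the number of active blocks stays at least $n^{1-o(1)}$ throughout the window --- indeed $b \geq X_1^n$, and $X_1^n$ is stochastically bounded below by a pure-death process which keeps it of order $ne^{-t_0}$, so that $\log b/\log n \to 1$ uniformly. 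On this good event the rescaled drift differs from the limiting field by $\bigO(1/\log n)$, and one checks in the same way that the neglected creation terms (mergers involving a non-singleton) and the $k \geq 2$ freezing terms vanish under the scaling.

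It then remains to control the martingale (fluctuation) parts. For each coordinate I would bound the rescaled predictable quadratic variation, namely the sum over transition types of the rescaled rate times the square of the rescaled jump, and show it is $\bigO(1/\log n)$ and hence vanishes. The delicate point is the singleton coordinate $\bar{X}_1^n$: a $k$-fold coalescence of singletons decreases $X_1^n$ by $k$, which can be as large as order $n$, so the jumps of $\bar{X}_1^n$ are not uniformly small. However, such high-multiplicity collisions occur at rate only $\approx X_1^n/(k(k-1))$, and after the $n^{-2}$ space scaling and $(\log n)^{-1}$ time scaling their total contribution to the quadratic variation is $\bigO(1/\log n)$. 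With the drift convergence established above and this quadratic-variation bound, the Darling--Norris theorem~\cite{Darling/Norris}, combined with a Gronwall argument exploiting the triangular structure of the limiting system, yields $\sup_{0 \leq t \leq t_0}\|\bar{X}^{n,d}(t) - x^{(d)}(t)\| \convprob 0$, which is the assertion of the proposition.
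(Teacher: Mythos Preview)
Your proposal is correct and follows the same route as the paper: Darling--Norris fluid-limit methodology, with the drift approximation $\|\bar\beta^{n,d}-b^{(d)}\|=O(1/\log n)$, the quadratic-variation bound $\bar\alpha^{n,d}=O(1/\log n)$, and then Doob's $L^2$ inequality plus Gronwall. Your identification of the harmonic-number mechanism behind the $1/\log n$ time scaling, the triangular linear ODE, and the delicate point about large singleton jumps all match the paper's Lemmas~\ref{lem:htolog}--\ref{lem:var}.

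The one place where your sketch differs and is slightly incomplete is the localisation. You propose an \emph{a priori} stochastic domination ($X_1^n$ bounded below by a pure-death process with per-particle rate $\rho+H_{n-1}$) to force $\log b/\log n\to 1$ before running the fluid-limit argument. The paper instead introduces a stopping time $T_n^{R,d}$ restricting to the region where $\bar X_1^n\geq R^{-1}+d/n$ \emph{and} $\bar Y_2^n\leq R$, proves the fluid limit up to $T_n^{R,d}\wedge t_0$, and then bootstraps---using the fluid limit just established---to conclude $T_n^{R,d}>t_0$ with high probability. The second constraint matters: the ``neglected creation terms'' you mention contribute error terms proportional to $\sum_{i=2}^{d+1}\xi_i$ in the drift bound (Lemma~\ref{lem:drift}), so controlling $\bar X_1^n$ from below is not enough on its own; one also needs $\bar Y_2^n$ bounded above. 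Your domination argument handles the first constraint cleanly, but you would need a companion bound for $\bar Y_2^n$, or else adopt the paper's stop-and-bootstrap device, to close the argument.
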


This is the key to the following result.

\begin{prop} \label{prop:stoptime}
Take $\delta > 0$.  Then
\[
\Prob{\left|\frac{\log n}{n} Z_1^n(T_n) - \rho\right| > \delta} \to 0
\]
and, for $k \geq 2$,
\[
\Prob{\left|\frac{(\log n)^2}{n} Z_k^n(T_n) - \frac{\rho}{k(k-1)}
  \right| > \delta} \to 0,
\]
as $n \to \infty$.
\end{prop}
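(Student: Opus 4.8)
The plan is to read off the terminal values $\frac{\log n}{n}Z_1^n(T_n)=\bar Z_1^n(T_n\log n)$ and $\frac{(\log n)^2}{n}Z_k^n(T_n)=\bar Z_k^n(T_n\log n)$ (for $k\geq 2$) from the fluid limit, Proposition~\ref{prop:fluidlimit}, using that the freezing counts are monotone. Indeed, each $Z_k^n$ is nondecreasing in $t$ and constant after $T_n$, and the targets are exactly the long-time limits $z_1(\infty):=\lim_{t\to\infty}z_1(t)=\rho$ and $z_k(\infty)=\frac{\rho}{k(k-1)}$ of the deterministic functions. So it suffices to sandwich $\bar Z_k^n(T_n\log n)$ between $z_k(\infty)-\delta$ and $z_k(\infty)+\delta$ with high probability. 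A preliminary observation, itself a consequence of the fluid limit, is that $T_n\log n\to\infty$ in probability: on $\{T_n\log n<t_0\}$ we have $\bar X_1^n(t_0)=0$, whereas Proposition~\ref{prop:fluidlimit} forces $\bar X_1^n(t_0)\to x_1(t_0)=e^{-t_0}>0$, so $\Prob{T_n\log n<t_0}\to 0$ for every fixed $t_0$. The lower bound then follows at once: given $\delta$, choose $t_0$ so large that $z_k(t_0)>z_k(\infty)-\delta/2$; on $\{T_n\log n\geq t_0\}$ monotonicity gives $\bar Z_k^n(T_n\log n)\geq\bar Z_k^n(t_0)$, and the fluid limit gives $\bar Z_k^n(t_0)\to z_k(t_0)$, so $\bar Z_k^n(T_n\log n)>z_k(\infty)-\delta$ with high probability.

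For the upper bound I would use a compensator argument. Each active size-$k$ block freezes at rate $\rho$, so $Z_k^n(\cdot)-\rho\int_0^\cdot X_k^n(u)\,du$ is a martingale with predictable quadratic variation $\rho\int_0^\cdot X_k^n(u)\,du$. Stopping at $T_n$ and rescaling (the change of variables $u=t/\log n$, combined with the space scale, turns the compensator into $\rho\int_{t_0}^{T_n\log n}\bar X_k^n(t)\,dt$ in both the $k=1$ and $k\geq2$ cases) gives
\[
\bar Z_k^n(T_n\log n)-\bar Z_k^n(t_0)=(\text{martingale term})+\rho\int_{t_0}^{T_n\log n}\bar X_k^n(t)\,dt.
\]
The martingale term vanishes in $L^2$: its variance is the space scale times the expectation of the rescaled compensator, and the latter is $\bigO(1)$ while the scale tends to $0$. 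It therefore remains to show that the compensator integral is at most $z_k(\infty)-z_k(t_0)+\delta/2$ with high probability, since $\bar Z_k^n(t_0)\to z_k(t_0)$.

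To control the integral I would split it at a large fixed $t_1>t_0$. The part over $[t_0,t_1]$ converges, by the fluid limit and bounded convergence, to $\rho\int_{t_0}^{t_1}x_k(t)\,dt$, which is at most $\rho\int_{t_0}^{\infty}x_k(t)\,dt=z_k(\infty)-z_k(t_0)$. The tail over $[t_1,T_n\log n]$ must be shown to be small, uniformly in $n$. I would bound its expectation directly: $\E{\rho\int_{t_1}^{T_n\log n}\bar X_k^n(t)\,dt}$ is the rescaled expected number of size-$k$ freezings after real time $t_1/\log n$, which by exchangeability reduces to $n$ times the rescaled probability that a tagged individual still belongs to an active size-$k$ block at that time.

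This uniform tail estimate is the main obstacle, precisely because the fluid limit controls only compact time windows while $T_n\log n$ grows like $\log n$. The key input is a uniform-in-$n$ exponential decay bound for the tagged-survival probability: while the number of active blocks $b$ is still large, any fixed block is removed by coalescence at the harmonic rate $H_{b-1}\approx\log n$, so in rescaled time the probability of remaining in a given small block decays like $e^{-t}$, making the tail expectation $\bigO(e^{-t_1})$. A Markov inequality then renders the tail negligible for large $t_1$, and assembling the compact part, the tail, and the vanishing martingale term yields $\bar Z_k^n(T_n\log n)\leq z_k(\infty)+\delta$ with high probability, completing the proof.
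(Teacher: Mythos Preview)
Your lower bound is correct and is essentially the paper's argument: monotonicity of $Z_k^n$, the observation $\Prob{T_n\log n<t_0}\to 0$, and the fluid limit at $t_0$ together yield $\bar Z_k^n(T_n\log n)>z_k(\infty)-\delta$ with high probability.

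The upper bound, however, has a genuine gap at exactly the step you flag as ``the main obstacle''. Your heuristic ``a fixed block is removed at rate $H_{b-1}\approx\log n$'' holds only while the number $b$ of active blocks remains of order $n$; once $b=o(n)$ the rescaled removal rate falls well below $1$, so there is no uniform $e^{-t}$ decay of the tagged-survival probability over $[t_1,T_n\log n]$ (an interval of length of order $(\log n)\log\log n$, incidentally, not $\log n$). Worse, your $L^2$ bound on the martingale term already presupposes that the expected rescaled compensator is $O(1)$, which is precisely the tail estimate still outstanding, so as written the two steps are circular.

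The paper supplies the missing idea by inserting an intermediate stopping time
\[
\tau_n=\inf\Bigl\{t\geq \tfrac{t_0}{\log n}:\ X_1^n(t)<\tfrac{n}{(\log n)^3}\ \text{and}\ Y_2^n(t)<\tfrac{n}{(\log n)^3}\Bigr\}.
\]
On $[\tau_n,T_n]$ the increment of any $Z_k^n$ is deterministically at most $X_1^n(\tau_n)+Y_2^n(\tau_n)<2n/(\log n)^3$, which is negligible after either rescaling. On $[t_0/\log n,\tau_n]$ the paper does \emph{not} attempt a pointwise decay estimate; it time-changes so that the total rescaled active mass runs at unit speed, shows that $A^n(t)=2\tilde X_1^n(t)+\tilde Y_2^n(t)+t$ has drift at most $(6\log\log n+1)/\log n$ there (the constraint $b\geq n/(\log n)^3$ is exactly what keeps $h(b)/\log n$ close to $1$), and uses optional stopping to bound $\E{\tilde\tau_n\IO}$ by essentially $2x_1(t_0)+y_2(t_0)$. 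Since the rescaled compensator on this interval is dominated by $\rho\tilde\tau_n$, the desired tail bound follows and can be made small by choosing $t_0$ large. This two-regime split --- ``many blocks, Lyapunov/optional-stopping control'' versus ``few blocks, trivial residual'' --- is the device your sketch lacks.
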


Theorem~\ref{thm:main} now follows directly, since $N_k(n) =
Z_k^n(T_n)$ for $k \geq 1$.  Note that
Proposition~\ref{prop:fluidlimit} tells us \emph{how} the allele
frequency spectrum is formed.

\textbf{Remark.}  Delmas, Dhersin and
Siri-Jegousse~\cite{Delmas/Dhersin/S-J} have recently considered the
lengths of coalescent trees associated with Beta coalescents for
$\alpha \in (1,2)$.  Part (1) of their Theorem 5.1 appears to be a
result analogous to our Proposition~\ref{prop:fluidlimit}.

\section{Proofs}

In this section, we prove Proposition~\ref{prop:fluidlimit} and
deduce Proposition~\ref{prop:stoptime}. In order to do so, we use
the fluid limit methodology described in Darling and
Norris~\cite{Darling/Norris}. Firstly, we need to set up some
notation. Let $\beta^{n,d}(m)$ be the drift of the process
$X^{n,d}$ when it is in state $m = (m_1, m_2, \ldots, m_{d+2}) \in
\{0,...,n\}^{d+2}$, so that 
\[
\beta^{n,d}(m) = \sum_{m' \neq m} (m' - m) q^{n,d}(m, m'),
\]
where $q^{n,d}(m, m')$ is the jump rate from $m$ to $m'$. Let
$\alpha^{n,d}(m)$ be the corresponding variance of a jump, in the
sense that
\[
\alpha^{n,d}(m) = \sum_{m' \neq m} \| m' - m \|^2 q^{n,d}(m, m').
\]
Let us also introduce the notation
\[
\alpha^{n,d}_k(m) = \sum_{m' \neq m} | m'_k - m_k |^2 q^{n,d}(m,
m'),
\]
for $1 \leq k \leq d+2$, so that we may decompose $\alpha^{n,d}(m)$ as
\[
\alpha^{n,d}(m) = \sum_{k=1}^{d+2} \alpha^{n,d}_k(m).
\]
Finally, let $M \deff \sum_{k=1}^{d+1} m_k$ denote the total number of
active blocks in the partition. We will need to compute the drift and
infinitesimal variance of the re-scaled process $\bar{X}^{n,d}$, which
takes values in the set
\[
\mathcal{S}^{n,d}\!\deff\! \left\{0,\frac{1}{n},\ldots,1\right\}
\!\times \!\left\{0,\frac{\log n}{n}, 2 \frac{\log n}{n}, \ldots,
\log n\right\}^d \!\!\times \!\left\{0, \frac{(\log n)^{r}}{n}, 2
\frac{(\log n)^{r}}{n}, \ldots, (\log n)^{r}\right\},
\]
where $r=1$ if $d=1$ and $r=2$ if $d\ge 2$.  Denote by
$\bar{\beta}^{n,d}(\xi)$ and $\bar{\alpha}^{n,d}(\xi)$ the drift and
infinitesimal variance of $\bar{X}^{n,d}$ when it is in the state $\xi
= (\xi_1, \xi_2, \ldots, \xi_{d+2}) \in \mathcal{S}^{n,d}$.  Then,
letting $m = (n\xi_1,\frac{n}{\log n}\xi_2,\ldots,\frac{n}{\log
  n}\xi_{d+1},\frac{n}{(\log n)^r}\xi_{d+2})$, we have
\begin{align}
\bar{\beta}_k^{n,d}(\xi) 
&= \begin{cases}
\frac{1}{n\log n}\beta_1^{n,d}(m) & k= 1\\
\frac{1}{n}\beta_k^{n,d}(m) & 2 \leq k \leq d+1 \\
\frac{(\log n)^{r-1}}{n}\beta_{d+2}^{n,d}(m) & k=d+2, \\
\end{cases} \label{eqn:betabar} \\
\bar{\alpha}^{n,d}_k(\xi)
&= \begin{cases}
\frac{1}{n^2\log n} \alpha_1^{n,d}(m) & k = 1 \\
\frac{\log n}{n^2} \alpha_k^{n,d}(m) & 2 \leq k \leq d+1 \\
\frac{(\log n)^{2r-1}}{n^2} \alpha_{d+2}^{n,d}(m) & k = d+2
\end{cases} \notag
\end{align}
and
\[
\bar{\alpha}^{n,d}(\xi) = \sum_{k=1}^{d+1} \bar{\alpha}^{n,d}_k(\xi).
\]
Now define $b^{(d)}: \R^{d+2} \to \R^{d+2}$ co-ordinatewise by
\[
b^{(d)}_k(\xi) = \begin{cases}
-\xi_1 & k = 1\\
\frac{1}{k(k-1)}\xi_1 - \xi_k & 2 \leq k \leq d \\
\frac{1}{d}\xi_1 - \xi_{d+1} & k=d+1 \\
\rho \xi_d & k=d+2.
\end{cases}
\]
Then the vector field $b^{(d)}$ is Lipschitz in the Euclidean norm
with constant $K \deff \sqrt{\rho^2 + \frac{\pi^2}{3}}$.  The function
$x^{(d)}(t)$ of the previous section is the unique solution of the
differential equation
\[
\frac{d}{dt} x^{(d)}(t) = b^{(d)}(x^{(d)}(t)).
\]

In order to prove Proposition~\ref{prop:fluidlimit}, we need a few
lemmas.  Firstly, we prove two analytic results.  For $n \in \N$, let
$h(n) = \sum_{i=1}^{n-1} \frac{1}{i}$, the $(n-1)$th harmonic number.

\begin{lemma} \label{lem:htolog}
Fix $R > e$.  Then for $x \in \frac{1}{n} \Z \cap [R^{-1},1]$,
\[
\left|\frac{h(nx)}{\log n} - 1 \right| \leq \frac{\log R}{\log n}.
\]
\end{lemma}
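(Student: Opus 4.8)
The plan is to reduce the claim to the single estimate $|h(nx) - \log n| \le \log R$, since dividing this through by $\log n$ produces exactly the stated inequality. The key observation is that the hypothesis $x \in \frac{1}{n}\Z \cap [R^{-1},1]$ forces $m \deff nx$ to be an \emph{integer} lying in the interval $[n/R, n]$. Hence $h(nx) = h(m) = \sum_{i=1}^{m-1} \frac{1}{i}$ is an honest $(m-1)$th harmonic number, and the whole problem reduces to comparing $h(m)$ with $\log n$ as $m$ ranges over this window.

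To carry this out I would control $h(m)$ by the usual integral comparison. For the lower bound, since $\frac{1}{i} \ge \int_i^{i+1} \frac{du}{u}$, summing over $1 \le i \le m-1$ gives $h(m) \ge \int_1^m \frac{du}{u} = \log m$. For the upper bound, since $\frac{1}{i} \le \int_{i-1}^i \frac{du}{u}$ for $i \ge 2$, summing the terms with $i \ge 2$ and keeping the $i=1$ term ($= 1$) separate yields $h(m) \le 1 + \int_1^{m-1} \frac{du}{u} = 1 + \log(m-1) \le 1 + \log m$.

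Now I would feed in the range of $m$. Because $m \ge n/R$, the lower bound gives $h(m) \ge \log(n/R) = \log n - \log R$. Because $m \le n$, the upper bound gives $h(m) \le 1 + \log n$; and since $R > e$ we have $\log R > 1$, so in fact $h(m) \le \log n + \log R$ as well. Combining the two, $-\log R \le h(m) - \log n \le \log R$, which is the required bound, and dividing by $\log n$ finishes the argument.

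I do not expect a genuine obstacle here; the result is an elementary estimate. The only two points needing a little care are (i) the additive slack of $1$ in the harmonic upper bound, which is absorbed precisely because the hypothesis $R > e$ guarantees $\log R > 1$ — this is presumably exactly why the statement requires $R > e$ rather than merely $R > 1$; and (ii) the degenerate case $m = 1$ (empty sum, $h(1) = 0$), where $\log(m-1)$ is ill-defined, but where the inequalities hold trivially since $nx = 1 \ge n/R$ then forces $\log n \le \log R$. I would simply verify this boundary case by hand.
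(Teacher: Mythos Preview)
Your proposal is correct and follows essentially the same approach as the paper: both proofs use the standard bounds $\log k \leq h(k) \leq 1 + \log(k-1) \leq 1 + \log k$ (obtained by integral comparison), then feed in the range $nx \in [n/R,n]$ and absorb the additive $1$ using $R > e$. Your treatment is slightly more explicit in handling the boundary case $m=1$, which the paper leaves implicit.
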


\begin{proof}
It is an elementary fact that, for $k \geq 2$,
\[
\log(k) \leq h(k) \leq 1 + \log(k-1) \leq 1 + \log(k).
\]
This entails that
\[
\left| \frac{h(nx)}{\log n} - 1 \right| \leq
\max \left\{-\frac{\log(x)}{\log n}, \frac{1 + \log(x)}{\log n}
\right\} \leq \frac{\log R}{\log n}
\]
in the specified range of $x$.
\end{proof}

\begin{lemma} \label{lem:binomcoeff}
For $0 \leq j \leq n$ and $k \geq 0$,
\[
0 \leq 1 - \frac{\ch{n}{j}}{\ch{n+k}{j}} \leq \frac{kj}{n-j+1}.
\]
\end{lemma}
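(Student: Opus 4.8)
The plan is to rewrite the ratio of binomial coefficients as a finite product and then apply an elementary product inequality. First I would cancel factorials to get
\[
\frac{\binom{n}{j}}{\binom{n+k}{j}} = \frac{n(n-1)\cdots(n-j+1)}{(n+k)(n+k-1)\cdots(n+k-j+1)} = \prod_{i=0}^{j-1} \frac{n-i}{n+k-i}.
\]
Since $k \geq 0$, each factor $\frac{n-i}{n+k-i}$ lies in $(0,1]$, so the product is at most $1$; this gives the lower bound $0 \leq 1 - \binom{n}{j}/\binom{n+k}{j}$ at once. (When $j=0$ the product is empty, equal to $1$, and both bounds hold trivially with equality.)

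For the upper bound, the key step is the Weierstrass-type inequality: for $a_0, \ldots, a_{j-1} \in [0,1]$ one has $1 - \prod_i a_i \leq \sum_i (1 - a_i)$, which follows by a one-line induction on $j$. Applying it with $a_i = \frac{n-i}{n+k-i}$ yields
\[
1 - \prod_{i=0}^{j-1} \frac{n-i}{n+k-i} \leq \sum_{i=0}^{j-1} \left(1 - \frac{n-i}{n+k-i}\right) = \sum_{i=0}^{j-1} \frac{k}{n+k-i}.
\]
I would then bound each summand by its largest value: the denominators $n+k-i$ decrease in $i$ and are smallest at $i = j-1$, where $n+k-(j-1) = n+k-j+1 \geq n-j+1 > 0$ (using $j \leq n$). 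Hence $\frac{k}{n+k-i} \leq \frac{k}{n-j+1}$ for every $i$, and summing the $j$ terms gives the claimed upper bound $\frac{kj}{n-j+1}$.

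There is no serious obstacle here; the only point requiring a little care is the choice of denominator bound. One must keep $n-j+1$ (rather than, say, $n$) in the denominator, which is exactly what emerges from taking the smallest factor $n+k-j+1$ and discarding the harmless $+k$. The inequality $1 - \prod a_i \leq \sum (1-a_i)$ does the rest of the work, and converts the multiplicative defect of the product into a sum that is easy to control termwise.
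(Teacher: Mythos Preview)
Your proof is correct. The approach differs from the paper's: they take logarithms, writing
\[
\log\frac{\ch{n}{j}}{\ch{n+k}{j}} = -\sum_{i=0}^{j-1}\bigl(\log(n-i+k)-\log(n-i)\bigr),
\]
bound each log-difference by $k/(n-i)$ via the mean value theorem, sum to get $\log(\text{ratio}) \geq -kj/(n-j+1)$, and finish with $e^{-x} \geq 1-x$. Your route stays with the product itself and uses the Weierstrass inequality $1-\prod a_i \leq \sum(1-a_i)$ directly, obtaining $\sum_i k/(n+k-i)$ rather than $\sum_i k/(n-i)$ before the final crude bound. Your argument is slightly more elementary (no calculus, no exponential inequality) and in fact yields a marginally sharper intermediate estimate; the paper's log-and-MVT approach is the more habitual reflex for bounding products but here introduces an unnecessary detour. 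Both collapse to the same final bound once one replaces the denominators by their common lower estimate $n-j+1$.
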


\begin{proof}
We have
\[
\log \left(\frac{\ch{n}{j}}{\ch{n+k}{j}}\right)
= - \sum_{i=0}^{j-1} ( \log(n - i + k) - \log(n - i) ).
\]
By the mean value theorem,
\[
\log(n - i + k) - \log(n - i) \leq \frac{k}{n-i}, \quad 0 \leq i \leq n-1.
\]
Hence,
\[
\sum_{i=0}^{j-1} ( \log(n - i + k) - \log(n - i) )
\leq \sum_{i=0}^{j-1} \frac{k}{n-i} \leq \frac{kj}{n-j+1}
\]
and so
\[
\log \left(\frac{\ch{n}{j}}{\ch{n+k}{j}}\right) \geq - \frac{kj}{n-j+1}.
\]
Since
\[
\exp \left(-\frac{kj}{n-j+1}\right)
\geq 1 - \frac{kj}{n-j+1},
\]
the result follows.
\end{proof}

We now have the necessary tools to begin proving the fluid limit result.

Fix $R > e$ and let $l(n,R,d) = R^{-1} + d/n$ and
\[
\tilde{\mathcal{S}}^{n,d} = \left\{\xi \in \mathcal{S}^{n,d}: \xi_1 \geq
  l(n,R,d), \quad \sum_{i=2}^{d+1} \xi_i \leq R \right\}.
\]
Let
\begin{align*}
T^{R,d,1}_n & = \inf\left\{t \geq 0: \bar{X}_1^n(t) < l(n,R,d) \right\}, \\
T^{R,d,2}_n & = \inf\left\{t \geq 0: \bar{Y}_2^n(t) > R \right\}
\end{align*}
and set $T^{R,d}_n = T_n^{R,d,1} \wedge T_n^{R,d,2}$.

\begin{lemma} \label{lem:drift}
For $\xi \in \tilde{\mathcal{S}}^{n,d}$,
there exists a constant $C(R)$, depending only on $R$, such that
\[
\| \bar{\beta}^{n,d}(\xi) - b^{(d)}(\xi) \| \leq \frac{C(R)}{\log
n}.
\]
It follows that for $t_0 < \infty$,
\[
\int_0^{T_n^{R,d} \wedge t_0} \|\bar{\beta}^{n,d}(\bar{X}^{n,d}(t))
- b^{(d)}(\bar{X}^{n,d}(t))\| dt \leq \frac{C(R)t_0}{\log n}.
\]
\end{lemma}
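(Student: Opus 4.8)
The plan is to compute the exact drift vector $\beta^{n,d}(m)$ of the unscaled Markov process $X^{n,d}$ coordinate-by-coordinate, substitute into the scaling relations \eqref{eqn:betabar}, and then show that the scaled drift $\bar\beta^{n,d}(\xi)$ differs from the target vector field $b^{(d)}(\xi)$ by a term of order $1/\log n$, uniformly over $\xi \in \tilde{\mathcal{S}}^{n,d}$. First I would enumerate the possible transitions. From a state with $M$ active blocks, two kinds of event occur: (i) a freezing event, in which one active block of some size $k$ becomes frozen, happening at rate $\rho$ per block, which increments $Z_k^n$ and decrements $X_k^n$; and (ii) a coalescence event, in which some collection of $j \geq 2$ active blocks merges into one, governed by the rates $\lambda_{M,j} = (j-2)!(M-j)!/(M-1)!$. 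For the fluid limit, the dominant mechanism turns out to be \emph{binary} coalescence ($j=2$), since $\lambda_{M,2} = 1/(M-1)$ while higher-order collisions contribute lower-order terms; quantifying this is the first place the harmonic-number estimate of Lemma~\ref{lem:htolog} and the binomial-ratio estimate of Lemma~\ref{lem:binomcoeff} will be deployed.

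Next I would assemble the drift in each coordinate. For $k=1$ (singletons), the relevant events are the freezing of a singleton (rate $\rho M_1$) and the loss of singletons through coalescence; the leading behavior of $\beta_1^{n,d}(m)$ should be $\approx -M_1 \cdot h(M)/\ldots$, and after multiplying by $1/(n\log n)$ and using Lemma~\ref{lem:htolog} to replace $h(nx)/\log n$ by $1$ with error $O(\log R/\log n)$, this matches $b_1^{(d)}(\xi) = -\xi_1$. For $2 \leq k \leq d$, the block of size $k$ is created predominantly when a size-$1$ and a size-$(k-1)$ block merge, or more generally from binary mergers; the combinatorial rate produces the coefficient $\frac{1}{k(k-1)}\xi_1 - \xi_k$ after one verifies that the chance a given coalescence involves the relevant sizes is governed by ratios of binomial coefficients controlled via Lemma~\ref{lem:binomcoeff}. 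The $k=d+1$ coordinate ($Y_{d+1}^n$, blocks of size $\geq d+1$) and the $k=d+2$ coordinate ($Z_d^n$, frozen blocks of size $d$, contributing $\rho\xi_d$) are handled analogously. In each case the strategy is the same: write the exact drift, isolate the leading term, and bound the discrepancy by a constant multiple of $1/\log n$ on the restricted state space $\tilde{\mathcal{S}}^{n,d}$, where the constraints $\xi_1 \geq l(n,R,d)$ and $\sum_{i=2}^{d+1}\xi_i \leq R$ guarantee that $M$ is of order $n$ and that the error estimates are uniform.

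The main obstacle, I expect, is controlling the contribution of \emph{multiple} collisions uniformly, and making the binomial-coefficient approximations tight enough to survive the scaling. Because a collision that merges $j$ active blocks can change several coordinates simultaneously, the drift in each coordinate is a sum over all $j$ of terms weighted by $\binom{M}{j}\lambda_{M,j}$ together with the combinatorial probability that the merged blocks have prescribed sizes; showing that the $j\geq 3$ contributions are genuinely $O(1/\log n)$ after scaling — and do not accumulate across the $O(\log n)$-sized harmonic sums — is the delicate part. Here the estimate $\binom{M}{j}\lambda_{M,j} = \frac{M}{j(j-1)} \cdot \frac{\binom{M-2}{j-2}}{\binom{M-1}{j-2}}$-type identities, combined with Lemma~\ref{lem:binomcoeff} to compare the sampling-without-replacement probabilities to the product of marginal frequencies, let one sum the geometric-like tail over $j$ and absorb everything into $C(R)/\log n$. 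Once the pointwise bound $\|\bar\beta^{n,d}(\xi) - b^{(d)}(\xi)\| \leq C(R)/\log n$ is established on $\tilde{\mathcal{S}}^{n,d}$, the integral bound is immediate: for $t \leq T_n^{R,d}$ the process stays in $\tilde{\mathcal{S}}^{n,d}$ by definition of the stopping times $T_n^{R,d,1}$ and $T_n^{R,d,2}$, so the integrand is bounded by $C(R)/\log n$ pointwise and integrating over $[0, T_n^{R,d}\wedge t_0]$ yields $C(R)t_0/\log n$.
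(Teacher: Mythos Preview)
Your overall architecture is right---compute the unscaled drift coordinatewise, rescale via (\ref{eqn:betabar}), bound the discrepancy on $\tilde{\mathcal{S}}^{n,d}$ using Lemmas~\ref{lem:htolog} and~\ref{lem:binomcoeff}, and observe that the integral bound is then immediate since the process stays in $\tilde{\mathcal{S}}^{n,d}$ up to $T_n^{R,d}$. But your identification of the dominant coalescence mechanism is wrong, and following it would derail the calculation.

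In the Bolthausen--Sznitman coalescent the total rate of $j$-collisions when $M$ active blocks are present is $\binom{M}{j}\lambda_{M,j} = \frac{M}{j(j-1)}$, which is of the \emph{same order} for every $j \geq 2$; binary collisions do not dominate. In the loss term for each coordinate, every $j$ contributes comparably and summing over $j$ yields the harmonic number $h(M)$---this is precisely why Lemma~\ref{lem:htolog} is needed, not as an error estimate for higher-order collisions but to convert the full sum $m_k h(M)$ into $m_k \log n$. More importantly, in the \emph{creation} term for a block of size $k \geq 2$, the leading contribution does \emph{not} come from a binary merger of a singleton with a size-$(k-1)$ block: that event has scaled rate of order $1/\log n$ because there are only $O(n/\log n)$ non-singleton blocks available. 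The leading contribution comes instead from the $j=k$ term in which $k$ singletons merge simultaneously: its rate is $\frac{M}{k(k-1)}\binom{m_1}{k}/\binom{M}{k}$, and since $m_1 \approx M \approx n\xi_1$ on $\tilde{\mathcal{S}}^{n,d}$, Lemma~\ref{lem:binomcoeff} shows the ratio of binomials is $1 - O(1/\log n)$, giving $\frac{\xi_1}{k(k-1)}$ after dividing by $n$. The remaining creation terms with $2 \leq j \leq k-1$ necessarily involve at least one non-singleton among the $j$ merging blocks, and are therefore bounded by $1 - \binom{m_1}{j}/\binom{M}{j} = O(1/\log n)$, again via Lemma~\ref{lem:binomcoeff}. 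Your binary-merger picture would not recover the coefficient $\frac{1}{k(k-1)}$ for $k \geq 3$, so the proof as sketched would not close.
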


\begin{proof}
We must perform some elementary (but rather involved) calculations.
From the rates of the process we will calculate the co-ordinates of
$\beta^{n,d}(m)$ in turn. Recall first that if $M$ active blocks
are present in the partition, the next event involves the
coalescence of precisely $j$ of them at rate
$\ch{M}{j}\lambda_{M,j}=\frac{M}{j(j-1)}$. Thus, we have
\begin{align*}
\beta_1^{n,d}(m) & = -\rho m_1- \sum_{j=2}^{M}  \frac{M}{j(j-1)}
\sum_{b_1=1}^{m_1} b_1 \frac{\ch{m_1}{b_1} \ch{M-m_1}{j-b_1}}
{\ch{M}{j}}
\\
& = -\rho m_1 - m_1 h\left(M\right).
\end{align*}
For $2 \leq k \leq d$,
\begin{align*}
\beta_{k}^{n,d}(m) & = -\rho m_k  
- \sum_{j=2}^{M} \frac{M}{j(j-1)} \sum_{b_k=1}^{m_k} b_k \frac{\ch{m_k}{b_k}
\ch{M-m_k}{j-b_k}} {\ch{M}{j}} \\
& \hspace{1.7cm} + \sum_{j=2}^{k} \frac{M}{j(j-1)}\!\!\!
\sumstack{0 \leq b_1, b_2, \ldots, b_{k-1} \leq j}{\sum_{l=1}^{k-1}
lb_l = k, \sum_{l=1}^{k-1} b_l = j}\!\!\! \frac{ \ch{m_1}{b_1}
\cdots \ch{m_{k-1}}{b_{k-1}} }{ \ch{M}{j} } \\
& = -\rho m_k - m_k h\left(M\right) + \sum_{j=2}^{k}
\frac{M}{j(j-1)} \!\!\!\sumstack{0 \leq b_1, b_2, \ldots, b_{k-1}
\leq j}{\sum_{l=1}^{k-1} lb_l = k, \sum_{l=1}^{k-1} b_l = j}\!\!\!
\frac{ \ch{m_1}{b_1} \cdots \ch{m_{k-1}}{b_{k-1}} } { \ch{M}{j} }.
\end{align*}
For the $(d+1)$th co-ordinate we have
\begin{align*}
\beta_{d+1}^{n,d}(m) & =  -\rho m_{d+1}  - \sum_{j=2}^{M}
\frac{M}{j(j-1)} \sum_{b_{d+1}=1}^{m_{d+1}} \negmedspace (b_{d+1}
- 1) \frac{ \ch{m_{d+1}}{b_{d+1}}\ch{M-m_{d+1}}{j-b_{d+1}}}
{\ch{M}{j}} \\
& \hspace*{2.1cm}+ \sum_{j=2}^{M} \frac{M}{j(j-1)}\!\!\!
\sumstack{0 \leq b_1, b_2, \ldots, b_{d} \leq j}{\sum_{l=1}^{d} lb_l
\geq d+1, \sum_{l=1}^{d} b_l = j}\!\!\! \frac{ \ch{m_1}{b_1} \cdots
\ch{m_{d}}{b_{d}} }
{ \ch{M}{j} } \\
& = -\rho m_{d+1} - m_{d+1} h\left(M\right)  + \sum_{j=2}^{M}
\frac{M}{j(j-1)} \!\!\!\sumstack{0 \leq b_1, b_2, \ldots, b_{d+1}
\leq j}{\sum_{l=1}^{d+1} lb_l \geq d+1, \sum_{l=1}^{d+1} b_l =
j}\!\!\! \frac{ \ch{m_1}{b_1} \cdots \ch{m_{d+1}}{b_{d+1}} } {
\ch{M}{j} }.
\end{align*}
Finally,
\[
\beta_{d+2}^{n,d}(m)  = \rho m_d.
\]

Using (\ref{eqn:betabar}) and the notation $m=(m_1,\ldots,m_{d+2})$,
we obtain the following expressions:
\begin{align*}
\bar{\beta}_1^{n,d}(\xi) &= -\frac{\rho}{\log n} \xi_1 - \frac{\xi_1
h\left(M\right)}{\log
n}, \\
\intertext{for $2 \leq k \leq d$,}
\bar{\beta}_k^{n,d}(\xi) &= -\frac{\rho}{\log n} \xi_k - \frac{\xi_k
h\left(M\right)}{\log n}+\frac{1}{n}\sum_{j=2}^{k}
\frac{M}{j(j-1)}  \!\!\!\sumstack{0 \leq b_1, b_2, \ldots, b_{k-1}
\leq j}{\sum_{l=1}^{k-1} lb_l = k, \sum_{l=1}^{k-1} b_l = j}
\!\!\!\frac{ \ch{m_1}{b_1} \cdots \ch{m_{k-1}}{b_{k-1}} } { \ch{M}{j}}, \\
\bar{\beta}_{d+1}^{n,d}(\xi) &= -\frac{\rho}{\log n} \xi_{d+1} -
\frac{\xi_{d+1} h\left(M\right)}{\log
n}+\frac{1}{n}\sum_{j=2}^{M} \frac{M}{j(j-1)} \!\!\!\sumstack{0
\leq b_1, b_2, \ldots, b_{k-1} \leq j}{\sum_{l=1}^{d+1} lb_l \ge
d+1, \sum_{l=1}^{d+1} b_l = j}\!\!\! \frac{
\ch{m_1}{b_1} \cdots \ch{m_{d+1}}{b_{d+1}} } { \ch{M}{j} },\\
\bar{\beta}_{d+2}^{n,d}(\xi) & = \rho \xi_d.
\end{align*}

Bearing in mind that $M=n\left(\xi_1+\frac{1}{\log n}
   \sum_{i=2}^{d+1}\xi_i\right)$, and using Lemma~\ref{lem:htolog}, we get
\begin{equation*}
|\bar{\beta}_1^{n,d}(\xi) - b^{(d)}_1(\xi)| \leq \frac{(\rho + \log
R)}{\log n} \xi_1.
\end{equation*}
Consider now the sum in the  expression for
$\bar{\beta}_k^{n,d}(\xi)$ when $2 \leq k \leq d$.  We split it into
two parts, $j=k$ and $2 \leq j \leq k-1$.  The $j=k$ term is
\[
\frac{\xi_1 + \frac{1}{\log n} \sum_{i=2}^{d+1} \xi_i}{k(k-1)}
\frac{\ch{n \xi_1}{k}}{\ch{n \xi_1 + \frac{n}{\log n} \sum_{i=2}^{d+1}
\xi_i}{k}}.
\]
By Lemma~\ref{lem:binomcoeff} we have
\[
\left| \frac{\xi_1 + \frac{1}{\log n} \sum_{i=2}^{d+1} \xi_i}{k(k-1)}
\frac{\ch{n \xi_1}{k}}{\ch{n \xi_1 + \frac{n}{\log n} \sum_{i=2}^{d+1}
\xi_i}{k}} - \frac{1}{k(k-1)} \xi_1 \right| \leq \frac{1}{\log n}
\left(1 + \frac{\xi_1}{\xi_1 - d/n}\right) \sum_{i=2}^{d+1} \xi_i.
\]
Turning now to the other term, if $2 \leq j \leq k-1$, we have
\[
\sumstack{0 \leq b_1, b_2, \ldots, b_{k-1} \leq j}{\sum_{l=1}^{k-1}
lb_l = k, \sum_{l=1}^{k-1} b_l = j} \!\!\!\frac{ \ch{m_1}{b_1}
\cdots \ch{m_{k-1}}{b_{k-1}} }
{ \ch{M}{j} } \\
\leq 1 - \frac{\ch{m_1}{j}}{\ch{M}{j}} \leq \frac{j}{\log n}
\frac{\sum_{i=2}^{d+1}\xi_i}{(\xi_1 - d/n)}
\]
and so
\begin{equation*}
\frac{1}{n}\sum_{j=2}^{k-1} \frac{M}{j(j-1)} \!\!\!\!\sumstack{0
\leq b_1, b_2, \ldots, b_{k-1} \leq j}{\sum_{l=1}^{k-1} lb_l = k,
\sum_{l=1}^{k-1} b_l = j} \!\!\!\!\frac{ \ch{m_1}{b_1} \cdots
\ch{m_{k-1}}{b_{k-1}} } { \ch{M}{j} } \leq \frac{1}{\log n}
\left(\xi_1 + \frac{1}{\log n} \sum_{i=2}^{d+1} \xi_i\right)
\frac{\sum_{i=2}^{d+1}\xi_i}{\xi_1 - d/n} h(d).
\end{equation*}
With another application of Lemma~\ref{lem:htolog}, it follows that
\begin{align*}
& |\bar{\beta}_k^{n,d}(\xi) - b^{(d)}_k(\xi)| \\
& \leq \frac{1}{\log n} \left( (\rho +
 \log R)\xi_k + \left(1 + \frac{\xi_1}{\xi_1 - d/n} \right)
\sum_{i=2}^{d+1} \xi_i + \left( \xi_1 + \frac{1}{\log n} \sum_{i=2}^{d+1}
\xi_i \right) \frac{\sum_{i=2}^{d+1} \xi_i}{\xi_1 - d/n} h(d) \right).
\end{align*}
We turn finally to the expression for $\bar{\beta}_{d+1}^{n}(\xi)$.
Consider the sum which constitutes the third term.  We have
\begin{align*}
\sumstack{0 \leq b_1, b_2, \ldots, b_{d+1} \leq j}{\sum_{l=1}^{d+1}
lb_l \geq d+1, \sum_{l=1}^{d+1} b_l = j}\!\!\! \frac{ \ch{m_1}{b_1}
\cdots \ch{m_{d+1}}{b_{d+1}} } { \ch{M}{j} } & = 1 -
\!\!\!\sumstack{0 \leq b_1, b_2, \ldots, b_{d+1} \leq
j}{\sum_{l=1}^{d+1} l b_l \leq d, \sum_{l=1}^{d+1} b_l = j}\!\!\!
\frac{ \ch{m_1}{b_1} \cdots \ch{m_{d+1}}{b_{d+1}} }
{ \ch{M}{j} } \\
& = 1 - \sum_{k=2}^d \negmedspace \sumstack{0 \leq b_1, b_2, \ldots,
b_{d+1} \leq j}{\sum_{l=1}^{d+1} l b_l = k, \sum_{l=1}^{d+1} b_l =
j} \!\!\!\frac{ \ch{m_1}{b_1} \cdots \ch{m_{d+1}}{b_{d+1}} } {
\ch{M}{j} }.
\end{align*}
But then
\begin{align*}
& \frac{1}{n}\sum_{j=2}^{M} \frac{M}{j(j-1)} \!\!\!\sumstack{0
\leq b_1, b_2, \ldots, b_{d+1} \leq j} {\sum_{l=1}^{d+1} lb_l \geq
d+1, \sum_{l=1}^{d+1} b_l = j} \!\!\!\frac{ \ch{m_1}{b_1} \cdots
\ch{m_{d+1}}{b_{d+1}} }
{ \ch{M}{j} } \\
&  =\frac{1}{n}\left( M-1 - \sum_{k=2}^d \frac{M}{k(k-1)}
\frac{\ch{m_1}{k}}{\ch{M}{k}}  - \sum_{k=2}^d \sum_{j=2}^{k-1}
\frac{M}{j(j-1)}\!\!\!\! \sumstack{0 \leq b_1, b_2, \ldots,
b_{k-1} \leq j}{\sum_{l=1}^{k-1} lb_l = k, \sum_{l=1}^{k-1} b_l =
j}\!\!\!\! \frac{ \ch{m_1}{b_1} \cdots \ch{m_{k-1}}{b_{k-1}} } {
\ch{M}{j} }\right)
\end{align*}
and so, arguing as before, we obtain
\begin{align*}
 |\bar{\beta}_{d+1}^{n,d}(\xi) - b^{(d)}_{d+1}(\xi)| & \leq \frac{1}{n} + \frac{1}{\log n} \left( (\rho + \log R)
\xi_{d+1} + \left(\xi_1 + \frac{1}{\log n} \sum_{i=2}^{d+1}
\xi_i \right) \frac{\sum_{i=2}^d \xi_i}{\xi_1 - d/n} dh(d) \right. \\
& \hspace{7.2cm} \left. + \ d \left(1 + \frac{\xi_1}{\xi_1 - d/n} \right)
\sum_{i=2}^{d+1} \xi_i \right).
\end{align*}
It is clear that
\[
|\bar{\beta}_{d+2}^{n,d}(\xi) - b^{(d)}_{d+2}(\xi)| = 0.
\]
Putting everything together, we obtain that
\[
\|\bar{\beta}^{n,d}(\xi) - b^{(d)}(\xi) \| \leq \frac{C(R)}{\log n},
\]
for some constant $C(R)$, whenever $\xi \in
\tilde{\mathcal{S}}^{n,d}$. The final deduction follows easily.
\end{proof}

\begin{lemma} \label{lem:var} Fix $R > e$.  Then there exists a
  constant $C'(R)$, depending only on $R$, such that for $\xi \in
  \tilde{\mathcal{S}}^{n,d}$,
\[
\bar{\alpha}^{n,d}(\xi) \leq \frac{C'(R)}{\log n}.
\]
It follows that for $t_0 < \infty$,
\[
\int_0^{T^{R,d}_n \wedge t_0} \bar{\alpha}^{n,d}(X_t) dt \leq
\frac{C'(R) t_0}{\log n}.
\]
\end{lemma}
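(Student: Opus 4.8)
The plan is to bound each coordinate's contribution $\bar{\alpha}^{n,d}_k(\xi)$ separately for $1 \le k \le d+1$ (recall that the $Z_d$ coordinate is excluded from $\bar{\alpha}^{n,d}$, so it need not be treated) and then sum. The natural starting point is the probabilistic reading of the rates already exploited for the drift: when $M$ active blocks are present and we condition on a $j$-fold coalescence, the $j$ merging blocks form a uniformly chosen $j$-subset, so the number $B_s$ of them of size $s$ is hypergeometric with parameters $(M, m_s, j)$, and the total rate of the event ``$j$ blocks coalesce, exactly $b_s$ of which have size $s$'' equals $\frac{M}{j(j-1)}\mathbb{P}(B_s = b_s)$. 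Together with the freezing rate $\rho m_k$ (a size-$k$ block freezes, decreasing $X_k$ by one), this lets me write each $\alpha^{n,d}_k(m)$ as an explicit sum.

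I would organize the increment bookkeeping by splitting each coalescence into a \emph{destruction} part and a \emph{creation} part. For the singleton coordinate there is no creation, since a $j$-fold merge with $j \ge 2$ always yields a block of size $\ge 2$; thus $X_1$ only ever decreases, by exactly the number $b_1$ of singletons involved, and $\alpha^{n,d}_1(m) = \rho m_1 + \sum_{j=2}^M \frac{M}{j(j-1)}\mathbb{E}[B_1^2]$. For $2 \le k \le d$ the crucial observation is that a merge of $j \ge 2$ blocks can \emph{create} a size-$k$ block only if all the merged blocks have size strictly less than $k$, so on such an event $B_k = 0$ and the increment of $X_k$ is exactly $+1$, whereas on every other event it is $-b_k$. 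Hence $\alpha^{n,d}_k(m)$ splits cleanly into a destruction term $\sum_{j=2}^M \frac{M}{j(j-1)}\mathbb{E}[B_k^2]$ and a creation term equal to the total rate of producing a size-$k$ block. The $(d+1)$th coordinate is handled identically: events that produce a block of size $\ge d+1$ give increment $1 - b_{d+1}$ and all others give $0$, so $|\Delta Y_{d+1}|^2 \le 2 + 2 B_{d+1}^2$.

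The two analytic inputs are then the hypergeometric moment bound $\mathbb{E}[B_s^2] \le j \frac{m_s}{M} + j^2 \frac{m_s^2}{M^2}$, which gives $\sum_{j=2}^M \frac{M}{j(j-1)}\mathbb{E}[B_s^2] \le m_s h(M) + 2 m_s^2$, and the crude bound that every creation rate is at most the total coalescence rate $\sum_{j=2}^M \frac{M}{j(j-1)} = M - 1$ (equivalently, the creation terms are precisely those already estimated in the proof of Lemma~\ref{lem:drift}). Feeding in the scale separation enforced on $\tilde{\mathcal{S}}^{n,d}$ — there $m_1 = n\xi_1 \le n$, while $m_k = \frac{n}{\log n}\xi_k$ for $2 \le k \le d+1$ with $\sum_{i=2}^{d+1}\xi_i \le R$, so $m_k \le nR/\log n$ and $M \le n(1 + R/\log n)$ — yields $\alpha^{n,d}_1(m) = O(n^2)$ (dominated by the $2m_1^2$ term) and $\alpha^{n,d}_k(m) = O(n^2/(\log n)^2)$ for $2 \le k \le d+1$ (the $2m_k^2$ term dominates, the freeze, harmonic and creation contributions all being $O(n)$). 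Applying the respective rescalings $\bar{\alpha}_1 = \frac{1}{n^2 \log n}\alpha_1$ and $\bar{\alpha}_k = \frac{\log n}{n^2}\alpha_k$ makes each coordinate $O(1/\log n)$ uniformly over $\tilde{\mathcal{S}}^{n,d}$, and summing the $d+1$ bounds produces the constant $C'(R)$.

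The integral statement is then immediate: by construction of $T_n^{R,d} = T_n^{R,d,1}\wedge T_n^{R,d,2}$, and since $\bar{Y}_2^n = \sum_{i=2}^{d+1}\bar{X}_i^n$ stays $\le R$ while $\bar{X}_1^n$ stays $\ge l(n,R,d)$, the rescaled process satisfies $\bar{X}^{n,d}(t) \in \tilde{\mathcal{S}}^{n,d}$ for all $t < T_n^{R,d}$, so the integrand is bounded by $C'(R)/\log n$ throughout $[0, T_n^{R,d}\wedge t_0]$ and the integral is at most $C'(R)t_0/\log n$. I expect the real crux to be the increment accounting in the coalescence events: because a single coalescence can absorb $\Theta(M)$ blocks, individual jumps are \emph{not} bounded, so one cannot argue by bounded increments and genuinely needs the second-moment (hypergeometric) computation. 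The companion observation that creation of a size-$k$ block forces $B_k = 0$ is what keeps the non-hypergeometric contributions at rate $O(n)$ rather than letting them blow up, and this is precisely what renders the three different space scalings mutually consistent.
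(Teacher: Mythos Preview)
Your approach is essentially the same as the paper's: both compute $\alpha_k^{n,d}(m)$ coordinate by coordinate, identify the dominant $m_k^2$ contribution from the hypergeometric second moment, and observe that under the rescalings this produces $\xi_k^2/\log n$, while the freezing, harmonic and creation contributions are all $O(n)$ unscaled and hence $O((\log n)/n)$ after rescaling. Your phrasing via the hypergeometric bound $\mathbb{E}[B_s^2]\le j m_s/M + j^2 m_s^2/M^2$ is slightly looser than the paper's exact identity $\mathbb{E}[B_s^2]=\frac{j(j-1)m_s(m_s-1)}{M(M-1)}+\frac{jm_s}{M}$, but leads to the same order.

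One small correction: you should not drop the $(d+2)$th coordinate. The displayed formula $\bar{\alpha}^{n,d}(\xi)=\sum_{k=1}^{d+1}\bar{\alpha}_k^{n,d}(\xi)$ in the paper appears to be a typo for $\sum_{k=1}^{d+2}$; the paper's own proof of this lemma computes $\alpha_{d+2}^{n,d}(m)=\rho m_d$ and includes it, and the application via Doob's $L^2$ inequality requires the full quadratic-variation rate of the $(d+2)$-dimensional martingale. Fortunately $\bar{\alpha}_{d+2}^{n,d}(\xi)=\rho\xi_d(\log n)^{r}/n$ with $r\le 2$, which is $o(1/\log n)$, so your bound survives unchanged once you add this term.
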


\begin{proof}
Recall that for $1 \leq k \leq d+2$ we have
\[
\alpha_k^{n,d}(m) = \sum_{m' \neq m} |m'_k - m_k|^2 q^{n,d}(m,m'),
\]
so that
\[
\alpha^{n,d}(m) = \sum_{k=1}^{d+2} \alpha_k^{n,d}(m).
\]
We will deal with the co-ordinates in turn.
\begin{align*}
\alpha_1^{n,d}(m) & = \rho m_1 + \sum_{j=2}^{M} \frac{M}{j(j-1)}
\sum_{b_1=1}^{m_1} b_1^2 \frac{\ch{m_1}{b_1}
\ch{M-m_1}{j-b_1}} {\ch{M}{j}}  \\
& = \rho m_1 + m_1 (m_1 - 1) + m_1 h(M).
\end{align*}
Hence,
\[
\bar{\alpha}_1^{n,d}(\xi)=\frac{1}{n^2 \log n}
\alpha_1^{n,d}(m)\leq \frac{\xi_1^2}{\log n} + \frac{C_1(R)}{n}
\]
for some constant $C_1(R)$.  For $2 \leq k \leq d$,
\begin{align*}
\alpha_k^{n,d}(m) & =  \rho m_k + \sum_{j=2}^{M}
\frac{M}{j(j-1)} \sum_{b_k=1}^{m_k} b_k^2 \frac{\ch{m_k}{b_k}
\ch{M-m_k}{j-b_k}} {\ch{M}{j}}  \\
& \hspace*{1.4cm} + \sum_{j=2}^{k} \frac{M}{j(j-1)}\!\!\!\!
\sumstack{0 \leq b_1, b_2, \ldots, b_{k-1} \leq j}{\sum_{l=1}^{k-1}
lb_l = k, \sum_{l=1}^{k-1} b_l = j}\!\!\!\! \frac{ \ch{m_1}{b_1}
\cdots
\ch{m_{k-1}}{b_{k-1}} } { \ch{M}{j} } \\
& = \rho m_k + m_k (m_k - 1) + m_k h(M)+   \sum_{j=2}^{k}
\frac{M}{j(j-1)}\!\!\!\! \sumstack{0 \leq b_1, b_2, \ldots,
b_{k-1} \leq j}{\sum_{l=1}^{k-1} lb_l = k, \sum_{l=1}^{k-1} b_l =
j}\!\!\!\! \frac{ \ch{m_1}{b_1} \cdots
\ch{m_{k-1}}{b_{k-1}} } { \ch{M}{j} }.
\end{align*}
Hence,
\[
\bar{\alpha}_k^{n,d}(\xi)=\frac{\log n}{n^2 }
\alpha_k^{n,d}(m)\leq \frac{\xi_k^2}{\log n} + \frac{C_k(R) \log
n}{n},
\]
for some constant $C_k(R)$.  Furthermore,
\begin{align*}
\alpha_{d+1}^{n,d}(m) & = \rho m_{d+1} + \sum_{j=2}^{M}
\frac{M}{j(j-1)} \sum_{b_{d+1}=1}^{m_{d+1}} (b_{d+1} - 1)^2
\frac{\ch{m_{d+1}}{b_{d+1}}\ch{M-m_{d+1}}{j-b_{d+1}}} {\ch{M}{j}} \\
& \hspace*{1.8cm} + \sum_{j=2}^{M} \frac{M}{j(j-1)}\!\!\!\!
\sumstack{0 \leq b_1, b_2, \ldots, b_{d} \leq j}{\sum_{l=1}^{d} lb_l
\geq d+1, \sum_{l=1}^{d} b_l = j} \!\!\!\!\frac{ \ch{m_1}{b_1}
\cdots
\ch{m_{d}}{b_{d}} } { \ch{M}{j} } \\
& \leq \rho m_{d+1} + m_{d+1} (m_{d+1} - 1) + m_{d+1} h(M) \\
& \hspace{1.8cm} +
\sum_{j=2}^{M} \frac{M}{j(j-1)} \!\!\!\!\sumstack{0 \leq b_1,
b_2, \ldots, b_{d} \leq j}{\sum_{l=1}^{d} lb_l \geq d+1,
\sum_{l=1}^{d} b_l = j}\!\!\!\! \frac{ \ch{m_1}{b_1} \cdots
\ch{m_{d}}{b_{d}} } {
\ch{M}{j} }.
\end{align*}
So we also get
\[
\bar{\alpha}_{d+1}^{n,d}(\xi)=\frac{\log n}{n^2 }
\alpha_{d+1}^{n,d}(m)\leq \frac{\xi_{d+1}^2}{\log n} +
\frac{C_{d+1}(R) \log n}{n},
\]
for some constant $C_{d+1}(R)$.  Finally, we have
\[
\alpha_{d+2}^{n,d}(m) = \rho m_d .
\]
So
\[
\bar{\alpha}_{d+2}^{n,d}(\xi)=\frac{\rho \xi_d\log n}{n} \text{ if
$d=1$ and } \bar{\alpha}_{d+2}^{n,d}(\xi)=\frac{\rho \xi_d(\log
n)^2}{n} \text{ if $d \ge 2$}.
\]
Hence,
\[
\bar{\alpha}^{n,d}(\xi) \leq \frac{\|\xi\|^2}{\log n} + \frac{D(R)
(\log n)^2}{n},
\]
for some constant $D(R)$.  Since $\|\xi\|^2 \leq (1+ R)^2$ in
$\tilde{\mathcal{S}}^{n,d}$, we obtain
\[
\bar{\alpha}^{n,d}(\xi) \leq \frac{C'(R)}{\log n},
\]
for some constant $C'(R)$ depending on $R$.
\end{proof}

\subsection*{Proof of Proposition~\ref{prop:fluidlimit}}
We will, in fact, prove the stronger result that, for any $d \geq 1$
and any $0 <\delta < 1$,
\begin{equation} \label{eqn:desired}
\Prob{\sup_{0 \leq t \leq t_0} \| \bar{X}^{n,d}(t) - x^{(d)}(t) \| \geq
  (\log n)^{\frac{\delta-1}{2}}} \to 0
\end{equation}
as $n \to \infty$.  Fix $d \geq 1$.  We follow the method used in Theorem 3.1
of Darling and Norris~\cite{Darling/Norris} and start by noting that
$\bar{X}^{n,d}(t)$ has the following standard decomposition
\begin{equation}
\bar{X}^{n,d}(t) = \bar{X}^{n,d}(0) + M^{n,d}(t) + \int_0^t
\bar{\beta}^{n,d}(\bar{X}^{n,d}(s)) ds, \label{eqn:Doob}
\end{equation}
where $(M^{n,d}(t))_{t \geq 0}$ is a martingale in the natural filtration
of $\bar{X}^{n,d}$.  Since
\[
x^{(d)}(t) = x^{(d)}(0) + \int_0^t b^{(d)}(x^{(d)}(s)) ds
\]
and $\bar{X}^{n,d}(0) = x^{(d)}(0)$ for all $n \in \N$, we have
\begin{align}
  & \sup_{0 \leq s \leq t} \|\bar{X}^{n,d}(s) - x^{(d)}(s) \| \notag \\
  & \qquad \leq \sup_{0 \leq s \leq t} \| M^{n,d}(s) \| + \int_0^t
  \|\bar{\beta}^{n,d}(\bar{X}^{n,d}(s)) - b^{(d)}(\bar{X}^{n,d}(s)) \| ds
  \notag \\
& \qquad \quad + \int_0^t
  \|b^{(d)}(\bar{X}^{n,d}(s)) - b^{(d)}(x^{(d)}(s)) \| ds. \label{eqn:decomp}
\end{align}

Recall that $K$ is the Lipschitz constant of $b^{(d)}$.  Fix $R > e$ and let
\begin{align*}
\Omega_{n,d,1} & = \left\{ \int_0^{T_n^{R,d} \wedge t_0}
  \|\bar{\beta}^{n,d}(\bar{X}^{n,d}(t)) - b^{(d)}(\bar{X}^{n,d}(t))\| dt
\leq \frac{1}{2} (\log n)^{\frac{\delta - 1}{2}}e^{-Kt_0} \right\}, \\
\Omega_{n,d,2} & = \left\{ \sup_{0 \leq t \leq T_n^{R,d} \wedge t_0}
  \|M^{n,d}(t)\| \leq  \frac{1}{2} (\log n)^{\frac{\delta -
      1}{2}}e^{-Kt_0}\right\} \\
\intertext{and}
\Omega_{n,d,3} & = \left\{ \int_0^{T_n^{R,d} \wedge t_0}
  \bar{\alpha}^{n,d}(\bar{X}^{n,d}(t)) dt \leq \frac{C'(R) t_0}{\log n} \right\}.
\end{align*}
From (\ref{eqn:decomp}) we obtain that for $t < T_n^{R,d} \wedge t_0$ and
on the event $\Omega_{n,d,1} \cap \Omega_{n,d,2}$,
\[
\sup_{0 \leq s \leq t} \|\bar{X}^{n,d}(s) - x^{(d)}(s) \|
\leq (\log n)^{\frac{\delta -1}{2}} e^{-Kt_0}
+ K \int_0^t \sup_{0 \leq r \leq s} \|\bar{X}^{n,d}(r) - x^{(d)}(r) \| ds.
\]
Hence, by Gronwall's lemma,
\[
\sup_{0 \leq t \leq T_n^{R,d} \wedge t_0} \|\bar{X}^{n,d}(t) - x^{(d)}(t) \|
\leq (\log n)^{\frac{\delta -1}{2}}.
\]
Now, by Doob's $L^2$-inequality,
\[
\E{\sup_{0 \leq t \leq T_n^{R,d} \wedge t_0} \| M^{n,d}(t) \|^2}
\leq 4 \E{\| M^{n,d}(T_n^{R,d} \wedge t_0) \|^2} \leq 4
\E{\int_0^{T_n^{R,d} \wedge t_0}
\bar{\alpha}^{n,d}(\bar{X}^{n,d}(s)) ds}.
\]
Combined with Chebyshev's inequality, this tells us that
\[
\Prob{\sup_{0 \leq t \leq T_n^{R,d} \wedge t_0} \| M^{n,d}(t) \|
\geq \frac{1}{2}(\log n)^{\frac{\delta- 1}{2}} e^{-Kt_0}, \Omega_{n,d,3}}
\leq \frac{16 C'(R) t_0e^{2Kt_0}}{(\log n)^{\delta}}.
\]
Hence, $\Prob{\Omega_{n,d,2} \setminus \Omega_{n,d,3}}
\to 0$.  By Lemmas~\ref{lem:drift} and \ref{lem:var}, we have
$\Prob{\Omega_{n,d,1}} \to 1$ and $\Prob{\Omega_{n,d,3}} \to 1$ as $n \to
\infty$.  But
\[
\Prob{\sup_{0 \leq t \leq T_n^{R,d} \wedge t_0} \|\bar{X}^{n,d}(t) - x^{(d)}(t) \|
> (\log n)^{\frac{\delta -1}{2}}} \leq \frac{16 C'(R)
  t_0e^{2Kt_0}}{(\log n)^{\delta}} + \Prob{\Omega_{n,d,1}^{\mathrm{c}}
  \cup \Omega_{n,d,3}^{\mathrm{c}}},
\]
which clearly tends to 0 as $n \to \infty$.

In fact, we wish to prove this result for $t_0$ rather than $T_n^{R,d}
\wedge t_0$.  Set
\[
\Omega_{n,R,d} = \left\{\sup_{0 \leq t \leq T_n^{R,d} \wedge t_0} \|\bar{X}^{n,d}(t) - x^{(d)}(t) \|
\leq (\log n)^{\frac{\delta -1}{2}}\right\}.
\]
Since $T_n^{R,d} = T_n^{R,d,1} \wedge T_n^{R,d,2}$, it will suffice for us to
show that $T_n^{R,d,1} > t_0$ and $T_n^{R,d,2} > t_0$ on $\Omega_{n,R,d}$
for all large enough $n$ and $R$.

Note firstly that $x_1(t) > 0$ for all $t \geq 0$ and that $x_1(t)$
decreases to $0$ as $t \to \infty$.  Take $n$ and $R$ large enough that
$x_1(t_0) > (\log n)^{\frac{\delta -1}{2}} + l(n,R,d)$.  Then
on $\Omega_{n,R,d}$,
\begin{align*}
\inf_{0 \leq t \leq T_n^{R,d,1} \wedge t_0} |\bar{X}^n_1(t)|
& \geq \inf_{0 \leq t \leq T_n^{R,d,1} \wedge t_0} \biggl| x_1(t) -
|\bar{X}^n_1(t) - x_1(t)| \biggr| \\
& \geq x_1(t_0)
- \sup_{0 \leq t \leq T_n^{R,d} \wedge t_0} \|\bar{X}^{n,d}(t) - x^{(d)}(t) \| \\
& > l(n,R,d).
\end{align*}
Note now that $0 \leq y_{2}(t) \leq e^{-1} < R$ for all $t \geq 0$.
Take $n$ to be sufficiently big that $(\log n)^{\frac{\delta -1}{2}} +
e^{-1} < R$.  Then on $\Omega_{n,R,d}$, we have
\[
\sup_{0 \leq t \leq T_n^{R,d,2} \wedge t_0} |\bar{Y}_2^n(t)|
\leq \sup_{0 \leq t \leq T_n^{R,d} \wedge t_0} \|\bar{X}^{n,d}(t) - x^{(d)}(t) \|
+ \sup_{0 \leq t \leq t_0} |y_2(t)| < R.
\]
The desired result (\ref{eqn:desired}) follows.

\subsection*{Proof of Proposition~\ref{prop:stoptime}}
For convenience we will write
\[
z_1(\infty) \deff \lim_{t \to \infty} z_1(t) = \rho \quad \text{and,
  for $k \geq 2$,} \quad z_k(\infty) \deff \lim_{t \to \infty} z_k(t) =
\frac{\rho}{k(k-1)}.
\]
Since we can take $t_0$ arbitrarily large, we can make $z_d(t_0)$
arbitrarily close to $z_d(\infty)$.  With high probability, on the
time interval $\left[0,t_0\right]$, $\frac{\log n}{n}
Z_1^n(\frac{t}{\log n})$ stays close to $z_1(t)$ and, likewise, for
$d \geq 2$, $\frac{(\log n)^2}{n} Z_d^n(\frac{t}{\log n})$ stays
close to $z_d(t)$.  So the work of this proof will be to demonstrate
that $Z^n_d(t)$ does not do anything ``nasty'' between times
$\frac{t_0}{\log n}$ and $T_n$.  (Note that this interval is
potentially quite long: absorption for the coalescent takes place at
about time $\log \log n$; see Proposition 3.4 of
\cite{Goldschmidt/Martin}.)  We will have to split the interval
$\left[\frac{t_0}{\log n}, T_n\right]$ into two parts and deal with
process separately on each.

The statement of Proposition~\ref{prop:stoptime} fixes some $\delta >
0$.  Take also $\eta > 0$ and fix $t_0$ such that
\begin{gather*}
2 x_1(t_0) +  y_2(t_0) < \frac{\delta \eta}{24 \rho} \\
\text{and, for $k \geq 1$,} \quad
|z_k(t_0) - z_k(\infty)| < \frac{\delta}{3}.
\end{gather*}
(We can do this uniformly in $k$ because of the special form of the
functions $x_1(t)$, $y_2(t)$ and $z_k(t), k \geq 1$.)  Take $0 <
\epsilon < \frac{\delta}{3} \wedge \frac{\delta \eta}{72\rho} \wedge
x_1(t_0)$.  Let
\[
\Omega_{n,d} = \left\{ \sup_{0 \leq t \leq t_0} \|\bar{X}^{n,d}(t) - x^{(d)}(t)\| <
\epsilon \right\}.
\]
Since $(\log n) T_n^{R,d,1} \leq T_n$ and $\epsilon < x_1(t_0)$, we
know by the argument in the proof of Proposition~\ref{prop:fluidlimit}
that $T_n > \frac{t_0}{\log n}$ on $\Omega_{n,1}$ for all sufficiently
large $n$ and $R$.  Let
\[
\tau_n = \inf\left\{t \geq \frac{t_0}{\log n}: X^n_1(t) <
  \frac{n}{(\log n)^3} \text{ and } Y^n_2(t) < \frac{n}{(\log n)^3}\right\}.
\]
We will first deal with the time interval $\left[\frac{t_0}{\log n},
  \tau_n \right)$.

\begin{lemma}
For sufficiently large $n$, we have
\begin{align}
\frac{\log n}{n} \E{\left(Z_1^n(\tau_n) - Z_1^n\left(\frac{t_0}{\log
      n}\right)\right)\IO} & \leq \frac{\delta \eta}{6}
\label{eqn:firstint1} \\
\intertext{and, for $k \geq 2$,}
\frac{(\log n)^2}{n} \E{\left(Z_k^n(\tau_n) - Z_k^n\left(\frac{t_0}{\log
      n}\right)\right)\IO} & \leq \frac{\delta \eta}{6}.
\label{eqn:firstint2}
\end{align}
\end{lemma}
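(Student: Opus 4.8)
The plan is to exploit the fact that each active block of size $k$ freezes at rate $\rho$, so that $Z_k^n(t) - \rho\int_0^t X_k^n(s)\,ds$ is a martingale. Since $Z_k^n$ is bounded by $n$, the event $\Omega_{n,1}$ is measurable with respect to the history up to time $t_0/\log n$, and $\tau_n$ is an a.s.\ finite stopping time with $\tau_n \geq t_0/\log n$, optional stopping gives
\[
\E{\left(Z_k^n(\tau_n) - Z_k^n(t_0/\log n)\right)\IO} = \rho\,\E{\IO \int_{t_0/\log n}^{\tau_n} X_k^n(s)\,ds}.
\]
The whole problem thus reduces to bounding the expected occupation integral of $X_k^n$ over $[t_0/\log n, \tau_n)$. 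The structural fact I will use throughout is that, by the very definition of $\tau_n$, at least one of $X_1^n$, $Y_2^n$ exceeds $n(\log n)^{-3}$ before $\tau_n$, so that $M(s) = X_1^n(s) + Y_2^n(s) \geq n(\log n)^{-3}$ for every $s < \tau_n$ and hence $h(M(s)) \geq h(n(\log n)^{-3}) \sim \log n$. This extra factor of $\log n$, coming from the coalescence rate, is exactly what converts the mass present at time $t_0/\log n$ (of order $n$ in the first coordinate, order $n/\log n$ in the others) into the required bounds.

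For $k=1$ I would note that $X_1^n$ is non-increasing (a singleton is only ever lost, by freezing or by coalescing into a larger block) and that, by the computations in the proof of Lemma~\ref{lem:drift}, its drift is $-(\rho + h(M))X_1^n$. Dynkin's formula, optional stopping at $\tau_n$, and the $\F_{t_0/\log n}$-measurability of $\IO$ then give
\[
\E{\IO\int_{t_0/\log n}^{\tau_n}(\rho + h(M(s)))X_1^n(s)\,ds} = \E{\IO\left(X_1^n(t_0/\log n) - X_1^n(\tau_n)\right)} \leq \E{\IO X_1^n(t_0/\log n)}.
\]
Bounding the integrand below by $h(n(\log n)^{-3})X_1^n(s)$ and using that on $\Omega_{n,1}$ one has $X_1^n(t_0/\log n) \leq n(x_1(t_0)+\epsilon)$, I obtain $\frac{\log n}{n}\rho\,\E{\IO\int X_1^n} \leq \frac{\log n}{h(n(\log n)^{-3})}\rho(x_1(t_0)+\epsilon)$. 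Since the prefactor tends to $1$ and $\rho(x_1(t_0)+\epsilon) < \frac{\delta\eta}{48} + \frac{\delta\eta}{72}$ by the choices of $t_0$ and $\epsilon$, this is below $\frac{\delta\eta}{6}$ for large $n$, which is (\ref{eqn:firstint1}).

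For $k \geq 2$ I would use the crude but $k$-uniform bound $X_k^n \leq Y_2^n$ (which is what makes the target $\frac{\delta\eta}{6}$ independent of $k$ attainable) and estimate $\int Y_2^n$. The main obstacle is that $Y_2^n$ is \emph{not} monotone: every coalescence event creates a new block of size $\geq 2$, and a direct drift computation (specialising the $(d+1)$-th coordinate of Lemma~\ref{lem:drift} to $d=1$, where the inner sum telescopes by Vandermonde) gives $\beta_{Y_2}^{n,1} = -(\rho + h(M))Y_2^n + (M-1)$, with a genuinely positive source term $M-1$. Dynkin's formula then yields
\[
\E{\IO\int_{t_0/\log n}^{\tau_n}(\rho + h(M(s)))Y_2^n(s)\,ds} \leq \E{\IO Y_2^n(t_0/\log n)} + \E{\IO\int_{t_0/\log n}^{\tau_n}(M(s)-1)\,ds}.
\]

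The hard part is controlling the second term, which I handle by a secondary compensator argument for $M$ itself. Since $M$ is non-increasing with drift $-\rho M - M(h(M)-1) - 1$, the same reasoning gives $\E{\IO\int_{t_0/\log n}^{\tau_n}(M(s)-1)\,ds} \leq \E{\IO M(t_0/\log n)}/(h(n(\log n)^{-3})-1)$, i.e.\ an $O(n/\log n)$ bound because $M(t_0/\log n) = O(n)$ on $\Omega_{n,1}$ and the denominator is $\sim\log n$. Feeding this back, dividing the previous display by $h(n(\log n)^{-3}) \sim \log n$, and inserting the $\Omega_{n,1}$ bounds $Y_2^n(t_0/\log n) \leq \frac{n}{\log n}(y_2(t_0)+\epsilon)$ and $M(t_0/\log n) \leq n(x_1(t_0)+\epsilon) + \frac{n}{\log n}(y_2(t_0)+\epsilon)$, I find
\[
\frac{(\log n)^2}{n}\rho\,\E{\IO\int_{t_0/\log n}^{\tau_n} X_k^n(s)\,ds} \leq \rho\left(x_1(t_0) + y_2(t_0) + 2\epsilon\right)(1 + o(1)).
\]
Since $\rho(x_1(t_0)+y_2(t_0)) \leq \rho(2x_1(t_0)+y_2(t_0)) < \frac{\delta\eta}{24}$ and $2\rho\epsilon < \frac{\delta\eta}{36}$ by the choices of $t_0$ and $\epsilon$, the right-hand side is below $\frac{\delta\eta}{6}$ for large $n$, establishing (\ref{eqn:firstint2}). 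The only delicate point in the whole argument is thus the positive creation term in the drift of $Y_2^n$; everything else is an exact compensator identity combined with the deterministic lower bound $M \geq n(\log n)^{-3}$ enforced by $\tau_n$.
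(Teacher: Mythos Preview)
Your argument is correct and reaches the same conclusion by a genuinely different route from the paper.

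The paper's proof introduces a stochastic time change, running the rescaled pair $(\bar X_1^n,\bar Y_2^n)$ at speed $(\tilde X_1^n+\tilde Y_2^n)^{-1}$, and then observes that the single combination $A^n(t)=2\tilde X_1^n(t)+\tilde Y_2^n(t)+t$ has drift $o(1)$ on the relevant range. Optional stopping for $A^n$ yields directly $\E{\tilde\tau_n\IO}<\frac{\delta\eta}{6\rho}$, and both occupation integrals $\int X_1^n$ and $\int Y_2^n$ are then bounded by undoing the time change, each integrand becoming at most $1$ in the new clock. Thus the paper handles the two cases simultaneously via one near-martingale.

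Your approach stays in the original time scale and instead exploits the deterministic lower bound $h(M(s))\geq h(n(\log n)^{-3})\sim\log n$ on $[t_0/\log n,\tau_n)$ inside three separate Dynkin identities: one for $X_1^n$ (immediate, since $X_1^n$ has no creation term), one for $M$ (to control $\int(M-1)$), and one for $Y_2^n$ (into which the bound on $\int(M-1)$ is fed). This two-stage cascade replaces the paper's time change. The trade-off is that your argument is more elementary---no change of clock, just repeated compensation---but less unified, requiring the auxiliary estimate on $M$ to absorb the positive source term in the drift of $Y_2^n$. The paper's choice of the combination $2\tilde X_1^n+\tilde Y_2^n$ is precisely what makes that source term cancel after the time change, which is why no secondary estimate is needed there.

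One small point worth making explicit in your write-up: the optional stopping steps should first be applied at $\tau_n\wedge t_1$ and then passed to the limit $t_1\uparrow\infty$ by monotone convergence (all integrands being nonnegative), exactly as the paper does; this avoids any integrability worries about the unbounded compensators.
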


\begin{proof}
Consider a new process $(\tilde{X}_1^n(t), \tilde{Y}_2^n(t))_{t \geq
  0}$ which starts from $(\bar{X}_1^n(t_0), \bar{Y}^n_2(t_0))$ and has
the same dynamics as $(\bar{X}_1^n(t), \bar{Y}_2^n(t))_{t \geq 0}$,
except with a stochastic time-change which means that time is now run
at instantaneous rate $(\tilde{X}^n_1(t) + \tilde{Y}^n_2(t))^{-1}$.  In
other words, if
\[
U^n(s) = \int_{0}^{s} \left[\frac{1}{n}X^n_1\left(\frac{t_0+u}{\log n}\right)
+ \frac{\log n}{n}Y^n_2\left(\frac{t_0+u}{\log n}\right)\right] du
\]
and
\[
V^n(t) = \inf\left\{s \geq 0: U^n(s) \geq t \right\}
\]
then
\[
\tilde{X}^n_1(t) = \frac{1}{n} X^n_1\left(\frac{t_0 + V^n(t)}{\log n}\right),
\quad \tilde{Y}^n_2(t)= \frac{\log n}{n} Y_2^n\left(\frac{t_0 +
    V^n(t)}{\log n}\right).
\]
Let $\tilde{\tau}_n = U^n((\log n)\tau_n - t_0) = \inf\left\{t \geq 0:
  \tilde{X}^n_1(t) < \frac{1}{(\log n)^3} \text{ and }
  \tilde{Y}^n_2(t) < \frac{1}{(\log n)^2} \right\}$.  Then we have
$\tilde{X}_1^n(\tilde{\tau}_n) = \frac{1}{n} X_1^n(\tau_n)$
and $\tilde{Y_2^n}(\tilde{\tau}_n) = \frac{\log n}{n}
Y_2^n(\tau_n)$.

The process $(\tilde{X}_1^n(t), \tilde{Y}_2^n(t))_{t \geq 0}$ has
drift vector $\tilde{\beta}^n(\xi)$ in state $\xi$, where
\begin{align*}
\tilde{\beta}^n_1(\xi) &
= -\frac{\rho \xi_1}{(\xi_1 + \xi_2) \log n} - \frac{\xi_1 h(n \xi_1 +
  \frac{n}{\log n} \xi_2)}{(\xi_1 + \xi_2) \log n} \\
\tilde{\beta}^n_2(\xi) &
= -\frac{\rho \xi_2}{(\xi_1 + \xi_2) \log n} - \frac{\xi_2 h(n \xi_1 +
  \frac{n}{\log n} \xi_2)}{(\xi_1 + \xi_2) \log n} + \frac{\xi_1 +
  \frac{1}{\log n} \xi_2 - \frac{1}{n}}{(\xi_1 + \xi_2)}.
\end{align*}
Let
\[
A^n(t) = 2 \tilde{X}^n_1(t) + \tilde{Y}^n_2(t) + t, \quad t \geq 0.
\]
Then $A^n(t)$ has drift
\begin{align*}
& 2 \tilde{\beta}_1^n(\xi) + \tilde{\beta}_2^n(\xi) +  1 \\
& \quad = -\frac{2\xi_1 + \xi_2}{\xi_1 + \xi_2} \left( \frac{h(n \xi_1 +
      \frac{n}{\log n} \xi_2)}{\log n} - 1 \right) + \frac{1}{\log n}
  \frac{\xi_2}{\xi_1 + \xi_2} -\frac{\rho (2 \xi_1 + \xi_2)}{(\xi_1 +
    \xi_2) \log n} - \frac{1}{n(\xi_1 + \xi_2)},
\end{align*}
in state $\xi$.  Intuitively, this is small for large $n$ and so
$(A^n(t))_{t \geq 0}$ is almost a martingale.  More
rigorously, we have
\[
2 \tilde{\beta}_1^n(\xi) + \tilde{\beta}_2^n(\xi) +  1 \\
\leq \frac{2 \xi_1 + \xi_2}{\xi_1 + \xi_2} \left( 1 - \frac{h(n \xi_1 +
      \frac{n}{\log n} \xi_2)}{\log n} \right)
+ \frac{1}{\log n} \frac{\xi_2}{\xi_1 + \xi_2}.
\]
Lemma~\ref{lem:htolog} remains true if we replace $R$ by $(\log n)^3$. So,
since $\frac{\xi_1}{\xi_1 + \xi_2}, \frac{\xi_2}{\xi_1 + \xi_2} \leq
1$ in $\mathcal{S}^{n,d}$, we obtain
\[
2 \tilde{\beta}_1^n(\xi) + \tilde{\beta}_2^n(\xi) +  1 \leq
\frac{6 \log \log n + 1}{\log n},
\]
whenever $\xi \in \mathcal{S}^{n,d}$ and $\xi_1 + \frac{1}{\log n}
\xi_2 \in \frac{1}{n} \Z
\cap \left[\frac{1}{(\log n)^3},1\right]$.

By the same standard decomposition as at (\ref{eqn:Doob}), there
exists a zero-mean martingale $(\tilde{M}^n(t))_{t \geq 0}$ such that
\[
A^n(t) = A^n(0) + \tilde{M}^n(t) + \int_0^t (2
\tilde{\beta}_1^n(\tilde{X}^n(s)) + \tilde{\beta}_2^n(\tilde{X}^n(s))
+  1)ds.
\]
Fix $t_1 > 0$.  For any particular $n$, $A^n(t)$ and $\int_0^t (2
\tilde{\beta}_1^n(\tilde{X}^n(s)) + \tilde{\beta}_2^n(\tilde{X}^n(s))
+ 1)ds$ are bounded on the time interval $[0,t_1]$ and so we may apply
the Optional Stopping Theorem to obtain that
\begin{align*}
& \E{\left(2 \tilde{X}_1^n(\tilde{\tau}_n \wedge t_1) +
    \tilde{Y}_2^n(\tilde{\tau}_n \wedge t_1)
  + (\tilde{\tau}_n \wedge t_1) \right) \IO} \\
& \qquad = \E{A^n(\tilde{\tau}_n \wedge t_1) \IO} \\
& \qquad = \E{A^n(0) \IO} + \E{\int_0^{\tilde{\tau}_n \wedge t_1} (2
  \tilde{\beta}_1^n(\tilde{X}^n(t)) +
  \tilde{\beta}_2^n(\tilde{X}^n(t)) +  1)dt \IO} \\
& \qquad = \E{\left(2 \bar{X}_1^n(t_0) + \bar{Y}_2^n(t_0) \right) \IO}
+ \E{\int_0^{\tilde{\tau}_n \wedge t_1} (2
  \tilde{\beta}_1^n(\tilde{X}^n(t)) +
  \tilde{\beta}_2^n(\tilde{X}^n(t)) +  1)dt \IO}.
\end{align*}
We have that $\tilde{X}_1^n(\tilde{\tau}_n \wedge t_1)$ and
$\tilde{Y}_2^n(\tilde{\tau}_n \wedge t_1)$ are both non-negative and so
\begin{align*}
\E{(\tilde{\tau}_n \wedge t_1)\IO}
& \leq \left(1 - \frac{6 \log \log n + 1}{\log
    n}\right)^{-1} \E{\left(2 \bar{X}_1^n(t_0) + \bar{Y}_2^n(t_0)
  \right) \IO} \\
& \leq 2(2x_1(t_0) + y_2(t_0) + 3 \epsilon) \\
& < \frac{\delta \eta}{6 \rho},
\end{align*}
since, for large enough $n$, $\left(1 - \frac{6 \log \log n + 1 }{\log
    n}\right)^{-1}$ is bounded above by $2$ and we have assumed that
$2 x_1(t_0) + y_2(t_0) < \frac{\delta \eta}{24 \rho}$ and $\epsilon <
\frac{\delta \eta}{72 \rho}$.  Letting $t_1 \uparrow \infty$, we obtain
by monotone convergence that
\[
\E{\tilde{\tau}_n \IO} < \frac{\delta \eta}{6 \rho}.
\]

Now, by a further application of the Optional Stopping Theorem and
monotone convergence,
\begin{align*}
\frac{\log n}{n} \E{\left(Z_1^n(\tau_n) - Z_1^n\left(\frac{t_0}{\log
        n}\right)\right) \IO}
& = \frac{\log n}{n} \E{\int_{\frac{t_0}{\log n}}^{\tau_n} \rho
  X_1^n(t) dt \IO} \\
& = \E{\int_0^{\tilde{\tau}_n} \frac{\rho
    \tilde{X}_1^n(s)}{\tilde{X}^n_1(s) + \tilde{Y}^n_2(s)} ds \IO} \\
& \leq \rho \E{\tilde{\tau}_n \IO},
\end{align*}
by changing variable in the integral.  Similarly, for $k \geq 2$,
\begin{align*}
\frac{(\log n)^2}{n} \E{\left(Z_k^n(\tau_n) -
    Z_k^n\left(\frac{t_0}{\log n}\right)\right) \IO}
& = \frac{(\log n)^2}{n} \E{\int_{\frac{t_0}{\log n}}^{\tau_n} \rho
  X_k^n(t) dt \IO} \\
& \leq \frac{(\log n)^2}{n} \E{\int_{\frac{t_0}{\log n}}^{\tau_n} \rho
  Y_2^n(s) ds \IO} \\
& = \E{\int_0^{\tilde{\tau}_n} \frac{ \rho
    \tilde{Y_2^n(s)}}{\tilde{X}^n_1(s) + \tilde{Y}^n_2(s)} ds \IO} \\
& \leq \rho \E{\tilde{\tau}_n \IO}.
\end{align*}
The result follows.
\end{proof}

From (\ref{eqn:firstint1}) and (\ref{eqn:firstint2}) and Markov's inequality,
\begin{align*}
& \Prob{\left| \frac{\log n}{n} \left(Z_1^n(\tau_n) - Z_1^n
      \left(\frac{t_0}{\log n}\right) \right) \right| > \frac{\delta}{3},
  \Omega_{n,1}} \\
& \qquad \leq \frac{3 \log n}{n \delta} \E{\left(Z_1^n(\tau_n) -
  Z_1^n\left(\frac{t_0}{\log n}\right)\right) \IO}
\leq \frac{\eta}{2} \\
\intertext{and, for $k \geq 2$,}
& \Prob{\left| \frac{(\log n)^2}{n} \left(Z_k^n(\tau_n) -
      Z_k^n\left(\frac{t_0}{\log n}\right) \right) \right| >
  \frac{\delta}{3}, \Omega_{n,1}} \\
& \qquad \leq \frac{3 (\log n)^2}{n \delta} \E{\left(Z_k^n(\tau_n) -
  Z_k^n\left(\frac{t_0}{\log n}\right)\right) \IO}
\leq \frac{\eta}{2}.
\end{align*}

Note that we necessarily have $\tau_n \leq T_n$.  Since $Z_k^n(t)$ is
increasing for all $k \geq 1$ and $Z_k^n(T_n) - Z_k^n(\tau_n) \leq
X^n_1(\tau_n) + Y^n_2(\tau_n) < \frac{2n}{(\log n)^3}$ for all $k
\geq 1$, we have that
\[
\frac{\log n}{n} \left(Z_1^n(T_n) - Z_1^n(\tau_n) \right) <
\frac{2}{(\log n)^2}
\]
and, for $k \geq 2$,
\[
\frac{(\log n)^2}{n} \left(Z_k^n(T_n) - Z_k^n(\tau_n) \right) <
\frac{2}{\log n}.
\]
For $n > \exp(\frac{6}{\delta})$ these quantities are both less than
$\frac{\delta}{3}$.  On $\Omega_{n,1}$ we have
\[
\left|\frac{\log n}{n} Z_1^n\left(\frac{t_0}{\log n}\right) -
  z_1(t_0)\right| \leq \frac{\delta}{3}.
\]
By taking $n$ sufficiently large, we have by
Proposition~\ref{prop:fluidlimit} that
$\Prob{\Omega_{n,1}^{\mathrm{c}}} < \frac{\eta}{2}$ and so we conclude
that
\[
\Prob{\left| \frac{\log n}{n} Z_1^n(T_n) - z_1(\infty) \right| >
  \delta} < \eta.
\]

Now consider the case $d \geq 2$.  On $\Omega_{n,d}$ we have
\[
\left|\frac{(\log n)^2}{n} Z_d^n\left(\frac{t_0}{\log n}\right)
- z_d(t_0)\right| \leq \frac{\delta}{3}
\]
and, by taking $n$ sufficiently large, we have by
Proposition~\ref{prop:fluidlimit} that
$\Prob{\Omega_{n,1}^{\mathrm{c}}} + \Prob{\Omega_{n,d}^{\mathrm{c}}} <
\frac{\eta}{2}$.  Hence,
\[
\Prob{\left| \frac{(\log n)^2}{n} Z_d^n(T_n) - z_d(\infty) \right| >
  \delta}  < \eta.
\]
But $\eta$ was arbitrary and so this completes the proof of
Proposition~\ref{prop:stoptime}.

\section{Comments}
\subsection{Asymptotic frequencies}

It would be very interesting to have a better understanding of the
distribution of the asymptotic frequency sequence of the allelic
partition associated with the Bolthausen-Sznitman coalescent. In
\cite{Gnedin/Hansen/Pitman}, Gnedin, Hansen and Pitman obtain
relations between the total number of blocks $N(n)$ of an exchangeable
random partition restricted to the set $\{1,\ldots,n\}$ and the
asymptotic form of the sequence $(f_i^\downarrow)_{i\ge 1}$.  More
precisely, they prove that, for any $\alpha\in (0,1)$ and any function
$\ell : \mathbb{R}_+\rightarrow \mathbb{R}_+$, slowly varying at
infinity, we have
\[
\frac{N(n)}{\Gamma(1-\alpha) n^\alpha
  \ell(n)} \convas 1 \Longleftrightarrow
\frac{\# \{i\ge 1: f_i^{\downarrow} \ge x\}}{\ell(1/x)x^{-\alpha}} \convas 1
\text{ as $x\rightarrow 0+$} \Longleftrightarrow
\frac{f_i^{\downarrow}}{\ell^*(i) i^{-1/\alpha}} \convas 1,
\]
where $\ell^*$ is also a slowly varying function which can be
expressed in term of $\alpha$ and $\ell$.

It would be nice to have a similar result for the allelic partition
associated with the Bolthausen-Sznitman coalescent. There are,
however, two main difficulties: first, we would need almost sure
convergence of the rescaled process $N(\cdot)$, whereas here we have
only established convergence in probability.  Second, the
Bolthausen-Sznitman coalescent corresponds to the critical case
$\alpha=1$ for which the first of the above equivalences no longer
holds.  In this setting, according to Proposition $18$ of Gnedin,
Hansen and Pitman~\cite{Gnedin/Hansen/Pitman}, we have only the
implication:
\[
x(\log x)^2 \# \{i\ge 1: f_i^{\downarrow} \ge x\} \convas \rho \text{
  as $x \to 0+$} \Longrightarrow \frac{\log n}{n} N(n)\convas \rho
\]
and, in addition, that
\[
\frac{\log n}{n} N_1(n) \convas \rho \quad \text{and} \quad 
\frac{(\log n)^2}{n} N_k(n) \convas \frac{\rho}{k(k-1)}, k \geq 2.
\]
The form of the limits is, of course, basically the same as in our
Theorem~\ref{thm:main} and so we might expect to find that
$f_i^{\downarrow} \sim \frac{\rho}{ i (\log i)^2}$ as $i$ tends to
infinity.

\subsection{Beta coalescents}

The fluid limit methods used in this paper can, in principle, be
extended to deal with other classes of coalescent process. For
instance, the method seems to work for the Beta coalescents with
parameter $\alpha\in (1,2)$.  However, the calculations are more
complicated than in the Bolthausen-Sznitman case. Indeed, for the
Bolthausen-Sznitman coalescent, the active partition is mostly
composed of singletons at any time, which essentially enables us to
neglect collisions between non-singleton blocks. This approximation
does not hold for the Beta coalescents with $\alpha \in (1,2)$.  Since
the relevant result has already been proved by Berestycki, Berestycki
and Schweinsberg~\cite{BBS2,BBS1} by other methods, we will not give
the details.

We may also consider the Beta coalescents with parameter
$\alpha\in(0,1)$. M\"ohle's result (\ref{eqn:Moehle}) that the total
number of mutations along the coalescent tree, re-scaled by $n$,
converges in distribution to some non-degenerate random variable
suggests that here we may expect to have convergence in distribution
of the allelic partition to a random vector.  Clearly, the fluid limit
methods used in the present paper do not adapt to this situation, but
we can still use them to investigate the expected value of the number
of blocks of different sizes.  Indeed, the drift of the re-scaled
process
\[
\begin{array}{ll}
\left(\frac{X_1^n(t)}{n},\frac{Y_{d+1}^n(t)}{n^\alpha},\frac{Z_{d}^n(t)}{n}\right)
&\text{ if $d=1$}\\
\left(\frac{X_1^n(t)}{n},\frac{X_2^n(t)}{n^\alpha},\ldots,\frac{X_d^n(t)}{n^\alpha},\frac{Y_{d+1}^n(t)}{n^\alpha},\frac{Z_{d}^n(t)}{n^\alpha}\right)
&\text{ if $d\ge2$}
\end{array}
\]
converges to an explicit function $b^{(d)}$ (but the variance
$\bar{\alpha}^{n,d}$ does not tend to 0). This enables us to
conjecture that
\[
N_1(n)\sim C_1 n \quad \mbox{ and }\quad  N_k(n)\sim C_k n^\alpha
\quad \mbox{ for } k\ge 2,
\]
where $C_1,C_2,\ldots$ are strictly positive random variables.  We
intend to address this problem in a future paper.

\section*{Acknowledgments}

A.-L.\ B.\ would like to thank the Statistical Laboratory at the
University of Cambridge for its kind invitation, which was the starting
point of this paper.  The early part of the work was done while C.\
G.\ held the Stokes Research Fellowship at Pembroke College,
Cambridge.  Pembroke's support is most gratefully acknowledged.  For
the later part, C.\ G.\ was funded by EPSRC Postdoctoral Fellowship
EP/D065755/1.  We would like to thank James Norris for several
extremely helpful discussions.

\bibliography{allelicpartition}

\small

\begin{tabbing}
\hspace{8.6cm} \= \kill \\
Anne-Laure Basdevant,\>
Christina Goldschmidt, \\
Laboratoire de Probabilit\a'es et Mod\a`eles Al\a'eatoires,\>
Department of Statistics, \\
Universit\a'e Pierre et Marie Curie (Paris VI),
\> University of Oxford, \\
Case courier 188,
\> 1 South Parks Road,\\
4, place Jussieu,
\> Oxford \\
75252 Paris Cedex 05
\> OX1 3TG \\
France
\> United Kingdom \\
\texttt{anne-laure.basdevant@ens.fr}\> \texttt{goldschm@stats.ox.ac.uk} \\
\texttt{http://www.proba.jussieu.fr/\~{}abasdeva/}\>
\texttt{http://www.stats.ox.ac.uk/\~{}goldschm/}
\end{tabbing}

\end{document}